\documentclass[12pt, a4paper]{article}
\usepackage[english]{babel}
\usepackage{amssymb,amsmath,amsthm}
\usepackage{verbatim}
\usepackage[margin=2cm]{geometry}
\usepackage{stmaryrd}
\usepackage{authblk}
\theoremstyle{plain}

\newtheorem{theorem}{Theorem}
\newtheorem*{theorem*}{Theorem}
\newtheorem*{lemma*}{Lemma}
\newtheorem*{definition*}{Definition}
\newtheorem*{remark*}{Remark}

\newtheorem*{corollary*}{Corollary}

\newtheorem{proposition}[theorem]{Proposition}

\def\R{\mathbb R}
\def\Q{\mathbb Q}

\def\Z{\mathbb Z}
\def\F{\mathbb F}
\def\L{\mathcal{L}}
\def\ord{{\rm ord}}

\def\={\;=\;}
\def\+{\,+\,}
\def\-{\,-\,}
\def\lb{\llbracket}
\def\rb{\rrbracket}
\def\bal{\begin{aligned}}
\def\eal{\end{aligned}}
\def\be{\begin{equation}\label}
\def\ee{\end{equation}}

\AtEndDocument{\bigskip{\footnotesize%
  \textsc{Institute of Mathematics of the Polish Academy of Sciences} \par
  \'{S}niadeckich 8, 00-656, Warsaw  \par
  \texttt{m.vlasenko@impan.pl} \par
}}

\title{Formal groups and congruences}
\author{Masha Vlasenko\footnote{This work was supported by the National Science Centre of Poland, grant UMO-2016/21/B/ST1/03084.}}

\begin{document}
\date{}
\maketitle

We give a criterion of integrality of an one-dimensional formal group law in terms of congruences satisfied by the coefficients of the canonical invariant differential. For an integral formal group law a $p$-adic analytic formula for the local characteristic polynomial at $p$ is given. We demonstrate applications of our results to formal group laws attached to L-functions, Artin--Mazur formal groups of algebraic varieties and hypergeometric formal group laws. 

This paper was written with an intention to give an explicit and self-contained introduction to the arithmetic of formal group laws, which would be suitable for non-experts. By this reason we consider only one-dimensional laws, though a generalization of our approach to higher dimensions is clearly possible. The ideas of congruences and $p$-adic continuity in the context of formal groups were considered by many authors. We sketch the relation of our results to the existing literature in a separate paragraph at the end of the introductory section.     

\section{Introduction}\label{sec:intro}

Let $R$ be a commutative ring with the identity. A formal group law of dimension~1 over $R$ is a power series in two variables $F(x,y) \in R\llbracket x,y \rrbracket$ satisfying the conditions
\[\bal & F(x,0) \= F(0, x) \= x\,,\\
 & F(F(x,y),z) \= F(x,F(y,z))\,.
\eal\]
A formal group law is said to be commutative when $F(x,y)=F(y,x)$. For two formal group laws $F_1$ and $F_2$ over $R$ and a ring $R' \supseteq R$, a homomorphism $h \in {\rm Hom}_{R'}(F_1,F_2)$ is a power series $h \in x R'\lb x \rb$ such that $h(F_1(x,y))=F_2(h(x),h(y))$. An invertible homomorphism is called an isomorphism. An isomorphism $h$ is called strict if $h(x) \equiv x$ modulo degree $\ge 2$.    

From now on we assume that $R$ is a characteristic zero ring (i.e. $R \to R \otimes \Q$ is injective). In this case every formal group law $F \in R\lb x,y\rb$ is commutative and strictly isomorphic over $R \otimes \Q$ to the trivial formal group law ${\mathbb G}_a(x,y)=x+y$. The unique strict isomorphism $f \in {\rm Hom}_{R\otimes \Q}(F, {\mathbb G}_a)$ is called the logarithm of $F$. The logarithm satisfies 
\be{logF}
F(x,y)=f^{-1}(f(x)+f(y))
\ee
and can be written in the form 
\be{log_with_coefs}
f(x) = \sum_{n=1}^{\infty} \frac{b_{n-1}}n x^n
\ee
with $b_n \in R$, $b_0=1$. The shift in indices ($b_{n-1}$ instead of $b_n$) looks more natural in the view of the fact that $f'(x) dx = \sum_{n=0}^{\infty} b_n x^n \, dx$ is the canonical invariant differential on $F$ (\cite{Ho69},\cite[\S 5.8]{H}). We shall characterize those $R$-valued sequences $\{ b_n ; n \ge 0\}$ which arise as sequences of coefficients of canonical differentials on one-parameter formal group laws over $R$. This question is trivial when $R=R\otimes \Q$, in which case any sequence $\{ b_n ; n \ge 0\}$ yields a formal group law over $R$ by formulas~\eqref{logF} and~\eqref{log_with_coefs}. 

Fix a prime number $p$ and assume that $R$ is equipped with a $p$th power Frobenius endomorphism $\sigma: R \to R$ (i.e. $\sigma(r) \equiv r^p \mod pR$). We extend $\sigma$ to polynomials $R[x]$ and power series $R\lb x \rb$ by assigning $\sigma(x)=x^p$. Our first result (Theorem~\ref{cong_fg_thm} below) gives a necessary and sufficient condition for the sequence $\{ b_n ; n \ge 0\}$ under which $p$ doesn't show up in the denominators in~\eqref{logF}. In order to state this criterion we need to define a transformation of the sequence $\{ b_n ; n \ge 0\}$. For a non-negative integer $n$ we will denote by $\ell(n) = \min\{s \ge 1: n < p^s\}$ the length of the $p$-adic expansion of $n$. For two non-negative integers $n,m$ we denote by $n * m$ the integer whose $p$-adic expansion is the concatenation of the $p$-adic expansions of $n$ and $m$ respectively, that is $n * m = n + m \, p^{\ell(n)}$. Notice that $n * 0=n$ and $\ell(n * m)=\ell(n)+\ell(m)$ if and only if $m>0$.

\begin{definition*} Let $\{ b_n; n \ge 0\}$ be a sequence of elements of $R$. The \emph{$p$-sequence} associated to $\{ b_n; n \ge 0\}$ is the sequence of elements of $R$ given by
\[
c_n \= \sum_{\begin{tiny}\bal n&=n_1*\ldots *n_k \\ n_1 \ge & \, 0, n_2,\ldots,n_k > 0 \eal\end{tiny}}  (-1)^{k-1} \; b_{n_1} \cdot \sigma^{\ell(n_1)}(b_{n_2})\cdot \sigma^{\ell(n_1)+\ell(n_2)}(b_{n_3}) \cdot \ldots \cdot \sigma^{\ell(n_1)+\ldots+\ell(n_{k-1})}(b_{n_k}) 
\]
for $n \ge 0$, where the sum runs over all possible decompositions of the $p$-adic expansion of $n$ into a tuple of $p$-adic expansions of non-negative integers.
\end{definition*}

\noindent The original sequence $\{ b_n; n \ge 0\}$ can be reconstructed from its $p$-sequence, hence any $R$-valued sequence can occur as a $p$-sequence associated to an $R$-valued sequence (see Section~\ref{sec:integrality} for details). To author's knowledge, $p$-sequences were first introduced by Anton Mellit in \cite{MV} motivated by the result which we will reproduce later (Proposition~\ref{Dwork_cong_prop} in Section~\ref{sec:AM}) in a slightly more general form.  
  
\bigskip

\begin{theorem}\label{cong_fg_thm} Let $R$ be a characteristic zero ring (i.e. $R \to R \otimes \Q$ is injective) endowed with a $p$th power Frobenius endomorphism $\sigma: R \to R$ (i.e. $\sigma(r) \equiv r^p \mod pR$ for every $r \in R$). Let $\{b_n ; n \ge 0\}$ be a sequence of elements of $R$ with $b_0=1$. Put $f(x) \= \sum_{n=1}^{\infty} \frac{b_{n-1}}{n} \, x^n$. The formal group law $F(x,y)=f^{-1}(f(x)+f(y))$ has coefficients in $R \otimes \Z_{(p)}$ if and only if the $p$-sequence $\{ c_n ; n \ge 0\}$ associated to $\{ b_n ; n \ge 0\}$ satisfies
\be{c_cong_0}
c_{mp^k-1} \in p^{k} R \quad\text{ for all }\quad m > 1, k\ge 0\,.
\ee
\end{theorem}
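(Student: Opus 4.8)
The plan is to pass through the canonical invariant differential $f'(x)\,dx$ and a Hazewinkel-type functional equation, and then to match that equation against the $p$-sequence by an induction on the $p$-adic length of the index. \emph{First}, writing $B(x)=f'(x)=\sum_{n\ge0}b_nx^n$, I would make the inversion $\{c_n\}\mapsto\{b_n\}$ explicit. Peeling off the highest block $n'$ in the alternating sum defining $c_n$ and telescoping the signs yields the recursion
\[
b_n \= c_n \+ \sum_{\substack{\hat n * n' = n\\ n'>0}} c_{\hat n}\,\sigma^{\ell(\hat n)}(b_{n'}),
\]
which both proves that $\{b_n\}$ is recovered from its $p$-sequence and, after grouping the first factor by $\ell(\hat n)=s$, sums into the closed functional equation
\[
B(x) \= \sum_{s\ge1} C_s(x)\,\sigma^s_{*}\!\big(B(x^{p^s})\big), \qquad C_s(x)=\sum_{p^{s-1}\le n<p^s} c_n\,x^n
\]
(the block $s=1$ also collecting $n=0$, so that $C_1$ has constant term $c_0=1$, and $\sigma_*$ denoting the coefficientwise action of $\sigma$). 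The cancellation of the inhomogeneous terms that produces this clean identity is the combinatorial content of the definition of the $p$-sequence.

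\emph{Second}, I would invoke Hazewinkel's functional equation lemma over $R\otimes\Z_{(p)}$, in both its direct and converse forms: $F(x,y)=f^{-1}(f(x)+f(y))$ lies in $R\otimes\Z_{(p)}\lb x,y\rb$ if and only if $f$ satisfies a functional equation $u\cdot f=g$ with $g\in R\otimes\Z_{(p)}\lb x\rb$, where $u=1-\sum_{i\ge1}s_iT^i$ is a twisted operator acting by $T\,h=\sigma_*h(x^p)$ and the coefficients $s_i$ obey the admissibility bound $p\,s_i\in R\otimes\Z_{(p)}$. Reading off the coefficient of $x^{mp^k}$ with $p\nmid m$ and cancelling the unit $m$, this is equivalent to the family of congruences
\[
b_{mp^k-1} \;\equiv\; \sum_{i=1}^{k} s_i\,p^i\,\sigma^i\!\big(b_{mp^{k-i}-1}\big) \pmod{p^{k}\,R}, \qquad k\ge1 .
\]

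\emph{Third}, I would use the first step to pin down the admissible $u$ attached to $f$ canonically, expecting $s_i=c_{p^i-1}/p^i$ — the coefficients indexed by the ``repunit'' integers $p^i-1$, which are exactly the ones left unconstrained in the statement. Substituting the reconstruction identity for $b_{mp^k-1}$ into the defect $b_{mp^k-1}-\sum_i s_ip^i\sigma^i(b_{mp^{k-i}-1})$, the repunit truncations telescope and the defect collapses to $c_{mp^k-1}$ plus a sum of terms each again of the form $c_{m'p^k-1}$ with $1\le m'<m$. An induction on $m$ at fixed $k$, with base case $m=2$ having an empty error term, then shows that the whole family of Step 2 holds precisely when $c_{mp^k-1}\in p^kR$ for every $m>1$, the congruences for $m=1$ holding automatically because the $s_i$ were chosen to annihilate the coefficients of $x^{p^k}$.

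The main obstacle is the bookkeeping in the third step: one must follow the $p$-adic valuation through the twisted concatenation products $\sigma^{\ell(\cdot)}(\cdot)$ and check both that the error blocks arising from the non-repunit truncations of the index reassemble into genuinely smaller constrained coefficients $c_{m'p^k-1}$, and that the admissibility bound $p\,s_i\in R\otimes\Z_{(p)}$ demanded by the functional equation lemma is consistent with the input data. It is this valuation matching, rather than the combinatorics of the first step or the black-box input of the second, that forces the exact index set $\{mp^k-1:m>1\}$ appearing in the theorem, and I expect it to be the most delicate part of the argument.
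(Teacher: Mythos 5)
Your Steps 1 and 2 match the paper's setup exactly: your reconstruction recursion is the paper's relation~\eqref{p_seq}, the functional equation with $s_i = c_{p^i-1}/p^i$ (i.e.\ $v_i = ps_i = c_{p^i-1}/p^{i-1}$) is the one the paper uses, and your telescoped defect is the paper's identity~\eqref{diff}, whose induction on $\ell(m)$ gives the sufficiency direction essentially verbatim. One local inaccuracy: the error blocks satisfy $m' > 1$, not merely $1 \le m' < m$ as you wrote, and this is not cosmetic --- a genuine $m'=1$ term would contribute $c_{p^k-1}$, which only lies in $p^{k-1}R$, and would wreck the induction. It is excluded because the digit of $mp^k-1$ in position $k$ is the last digit of $m-1$, so a split there yields $m' \ge 2$ whenever $p \nmid m$; the paper checks this point explicitly.

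The genuine gap is in the necessity direction of Step 3. Hazewinkel's converse (Proposition 20.1.3 in \cite{H}) hands you \emph{some} admissible sequence $(s_i)$ with integral defect; it does not let you ``pin down'' the canonical one. The admissible sequences attached to a strict isomorphism class form a coset $\{\theta\eta_v : \theta \in 1 + R_\sigma\llbracket T\rrbracket T\}$, and you must prove the canonical candidate lies in it. Your Step-3 equivalence (``the family of Step 2 holds precisely when $c_{mp^k-1} \in p^kR$'') is an equivalence only for the canonical $(s_i)$; for the $(s_i)$ the lemma actually supplies, the congruences are known but the comparison introduces differences $s_ip^i - c_{p^i-1}$ over which you have no a priori control. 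Since integrality of the canonical defects is \emph{literally equivalent} to the conclusion $c_{mp^k-1} \in p^kR$, the argument as stated is circular for the implication $\Rightarrow$. The paper closes this with two extra ideas: (a) the inductive identity~\eqref{c_via_v_and_d}, $c_{p^k-1} = p^{k-1}v_k + p^kd_{p^k} - \sum_{i<k}\sigma^i(c_{p^{k-i}-1})\,p^id_{p^i}$, valid for an \emph{arbitrary} admissible $(v_s)$, which yields $c_{p^k-1} \in p^{k-1}R$, so the canonical sequence is at least admissible; and (b) a transfer step: the $p$-typical logarithm $\widetilde f(x) = \sum_k b_{p^k-1}x^{p^k}/p^k$ satisfies the canonical functional equation on the nose (with $g(x)=x$), the resulting $\widetilde F$ is integral and isomorphic to $F$ by \cite[Theorem 16.4.14]{H}, and part (iii) of the functional equation lemma then forces $f$ itself to admit a functional equation with the canonical sequence --- only after which your induction on $\ell(m)$ applies. (Alternatively one can compare the two defects directly by a double induction on $k$ and $\ell(m)$, feeding (a) into the error terms $p^id_{p^i}\sigma^i(\cdot)$, but some such mechanism is indispensable; your closing ``valuation matching'' worry points at the right location without supplying it.)
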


\bigskip

The reader may notice that the statement of the theorem is trivial when $p$ is invertible in $R$, in which case $R \otimes \Z_{(p)} = R \otimes \Q$. Since $\cap_p (R \otimes \Z_{(p)}) = R$, the following global criterion follows immediately.

\begin{corollary*} Let $R$ be a characteristic zero ring endowed with a $p$th power Frobenius morphism for every rational prime $p$. Let $\{ b_n; n \ge 0\}$ be a sequence of elements of $R$ with $b_0=1$. In the notation of Theorem~\ref{cong_fg_thm}, we have $F(x,y) \in R\lb x,y \rb$ if and only if for every $p$ the respective $p$-sequence $\{ c_n ; n \ge 0\}$ (it is a different one for each $p$) associated to $\{ b_n; n \ge 0\}$ satisfies~\eqref{c_cong_0}.
\end{corollary*}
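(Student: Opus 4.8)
The plan is to deduce the global corollary from the local Theorem~\ref{cong_fg_thm} by intersecting over all primes. The key observation, which the text supplies, is that $\bigcap_p (R \otimes \Z_{(p)}) = R$ inside $R \otimes \Q$: a rational number lies in $\Z_{(p)}$ for every $p$ exactly when it lies in $\Z$, and this extends tensorially to $R$ because $R$ is torsion-free over $\Z$ (being characteristic zero). So the coefficients of $F$, which a~priori live in $R \otimes \Q$, lie in $R$ if and only if they lie in $R \otimes \Z_{(p)}$ for every prime $p$.

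First I would fix the data: the sequence $\{b_n\}$ with $b_0 = 1$, the logarithm $f(x) = \sum_{n \ge 1} \frac{b_{n-1}}{n} x^n$, and the formal group law $F(x,y) = f^{-1}(f(x) + f(y))$, whose coefficients lie in $R \otimes \Q$ by the characteristic-zero discussion preceding the theorem. The goal is the equivalence
\[
F(x,y) \in R\lb x,y\rb \quad\Longleftrightarrow\quad \text{for every prime } p,\ F(x,y) \in (R \otimes \Z_{(p)})\lb x,y\rb.
\]
The forward direction is immediate since $R \subseteq R \otimes \Z_{(p)}$ for each $p$. For the reverse direction I would argue coefficient by coefficient: writing $F(x,y) = \sum_{i,j} a_{ij} x^i y^j$ with $a_{ij} \in R \otimes \Q$, the hypothesis says each $a_{ij} \in R \otimes \Z_{(p)}$ for every $p$, whence $a_{ij} \in \bigcap_p (R \otimes \Z_{(p)}) = R$, so $F \in R\lb x,y\rb$.

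Next I would invoke Theorem~\ref{cong_fg_thm} as a black box for each fixed prime $p$: it identifies the condition $F(x,y) \in (R \otimes \Z_{(p)})\lb x,y\rb$ with the congruences~\eqref{c_cong_0} on the $p$-sequence $\{c_n\}$ associated to $\{b_n\}$ via the Frobenius $\sigma$ at $p$. The only point requiring care in transcription is that the $p$-sequence genuinely depends on $p$, both through the combinatorial operation $*$ (which uses $p$-adic expansions and lengths $\ell$) and through the choice of Frobenius $\sigma_p$ — a point the corollary's statement already flags with the parenthetical remark. Chaining the two equivalences gives that $F \in R\lb x,y\rb$ iff for every $p$ the $p$-sequence satisfies~\eqref{c_cong_0}.

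I expect no serious obstacle here, as this is a formal intersection argument: the mathematical content is entirely carried by Theorem~\ref{cong_fg_thm}. The one step deserving a sentence of justification is the identity $\bigcap_p (R \otimes \Z_{(p)}) = R$ at the level of $R$-modules; I would note that this reduces, after choosing a $\Z$-module decomposition or simply working with a single coefficient $a \in R \otimes \Q$, to the classical fact $\bigcap_p \Z_{(p)} = \Z$ in $\Q$, applied to the torsion-free $\Z$-module $R$. Everything else is a direct citation of the theorem prime by prime.
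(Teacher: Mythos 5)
Your proposal is correct and is exactly the paper's argument: the paper derives the corollary from Theorem~\ref{cong_fg_thm} in one line via the identity $\bigcap_p (R \otimes \Z_{(p)}) = R$, which you have simply spelled out coefficient by coefficient. One cosmetic caution: a torsion-free $\Z$-module need not admit a basis, so drop the aside about ``choosing a $\Z$-module decomposition'' and keep your single-coefficient reduction to $\bigcap_p \Z_{(p)} = \Z$, which works verbatim for torsion-free $R$.
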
 

We prove Theorem~\ref{cong_fg_thm} in Section~\ref{sec:integrality}. The proof is based on Hazewinkel's functional equation lemma (\cite[\S 2.2]{H}). By~\cite[Proposition 20.1.3]{H} every formal group law over a $\Z_{(p)}$-algebra is \emph{of functional equation type}. In our case it means that $F(x,y)$ has coefficients in $R \otimes \Z_{(p)}$ if and only if there exists a sequence of elements $v_1,v_2,\ldots \in R \otimes \Z_{(p)}$ such that the series
\be{func_eq_data}
g(x) \= f(x) \- \frac1{p} \sum_{s=1}^{\infty} v_s \cdot (\sigma^s f)(x) 
\ee
has coefficients in $R \otimes \Z_{(p)}$. Such a sequence $\{ v_s; s \ge 1 \}$ is non-unique. Our proof shows that one of possible choices is given by $v_s = \frac{c_{p^s-1}}{p^{s-1}} \in R$.   

\bigskip

From now on let us assume that $R$ is the ring of integers of a complete absolutely unramified discrete valuation field of characteristic zero and residue characteristic $p>0$, equipped with a lift $\sigma: R \to R$ of the $p$th power Frobenius on the residue field $R/pR$. In this case one can construct the \emph{shortest possible} functional equation~\eqref{func_eq_data} for the logarithm $f(x)$ of a formal group law $F(x,y) \in R\lb x,y \rb$. Namely, suppose we have any functional equation~\eqref{func_eq_data} and let 
\be{height_def}
h \= \inf\{ s \ge 1 \;:\; v_s \in R^{\times} \} \,.
\ee  
If $h=\infty$ then $f(x) \in R\lb x \rb$ and hence $F$ is strictly isomorphic to $\mathbb{G}_a$ over $R$. If $h<\infty$ then there exist unique $\alpha_1,\ldots,\alpha_{h-1} \in pR$ and $\alpha_h \in R^{\times}$ such that
\[
f(x) \- \frac1{p} \sum_{s=1}^{h} \alpha_s \cdot (\sigma^s f)(x) \in R\lb x \rb\,. 
\]
The Eisenstein polynomial
\[
\Psi_F(T) \= p \- \sum_{i=1}^h \alpha_i T^i  \;\in\; R[T] 
\]
is called the \emph{characteristic polynomial} of $F(x,y)$ and $h$ is called the \emph{height}. Two formal group laws over $R$ are strictly isomorphic if and only if their characteristic polynomials are equal (\cite[Proposition 3.5]{Ho69}, \cite[Theorem 20.3.12]{H}). By convention, we put $\Psi_F(T)=p$ if $h = \infty$.

In the case when the residue field is finite ($\# R / pR = q = p^f$) the above defined height coincides with the height of the multiplication by $p$ endomorphism $[p]_{\overline{F}}$ of the reduction $\overline F(x,y) \in \F_q \lb x,y \rb$. However $\Psi_F(T)$ shall not be confused with the characteristic polynomial of the Frobenius endomorphism $\xi_{\overline F}(x)=x^q$. When $R=\Z_p$ then $\alpha_h^{-1} \Psi_F(T)$  coincides with the characteristic polynomial of $\xi_{\overline F}$, but in general the relation between these two invariants is more subtle (see \cite[\S 30.4, Remark 18.5.13]{H}).

The following theorem describes congruences satisfied by the coefficients of the logarithm of an integral formal group law and provides a $p$-adic analytic formula for the characteristic polynomial. 

\bigskip

\begin{theorem}\label{P_formulas_thm} Let $R$ be the ring of integers of a complete absolutely unramified discrete valuation field of characteristic zero and residue characteristic $p>0$, equipped with a lift $\sigma: R \to R$ of the $p$th power Frobenius on the residue field $R/pR$. Let $F(x,y) \in R\lb x,y \rb$ be a formal group law of height $h$. When $h < \infty$ we denote by $\Psi_F(T) = p - \sum_{i=1}^h \alpha_i T^i \in R[T]$  the characteristic polynomial of $F$. We write the logarithm of $F$ in the form
\[
\log_F(x) \= \sum_{n=1}^{\infty} \frac{b_{n-1}}{n} x^n\,,
\] 
where $\{ b_n; n \ge 0\}$ is a sequence of elements of $R$ with $b_0=1$. 
\begin{itemize}
\item[(i)] If $h \= \infty$ then $\ord_p(b_{p^n-1}) \ge n$ for all $n$.

 If $h < \infty$ then $\ord_p(b_{p^n-1}) \ge n - \lfloor\dfrac n h \rfloor$ with equality 
when $h | n$.

\item[(ii)] When $h<\infty$, consider elements $\beta_n = b_{p^n-1} / p^{n - \lfloor\frac n h \rfloor}\in R$. We have 
\[
\beta_{kh} \equiv \prod_{i=0}^{k-1} \sigma^{ih}(\beta_h) \mod p\,.
\]

\item[(iii)] When $h<\infty$, consider  for every $k \ge 1$  an $h \times h$ matrix with coefficients in $R$ given by 
\[
D_k \= \Bigl(p^{\varepsilon_{ij}} \sigma^{j+1}(\beta_{kh-1+i-j}) \Bigr)_{0 \le i,j \le h-1} \quad\text{ with }\quad \varepsilon_{ij} \= \begin{cases} 0\,, &j<i \text{ or } j=h-1\,,\\ 1\,, &i \le j < h-1\,.\end{cases} 
\]
We have that $\det D_k \equiv (-1)^{h-1} \, \prod_{s=1}^{kh-1}  \sigma^s(\beta_h)\ne 0 \mod p$ and
\be{padic_formula}
 \begin{pmatrix} \alpha_1/p \\ \alpha_2/p \\ \vdots \\ \alpha_{h-1}/p \\ \alpha_h \end{pmatrix} \; \equiv \;  D_k^{-1} \begin{pmatrix} \beta_{kh}\quad\;\; \\ \beta_{kh+1}\quad \\ \vdots \\ \beta_{kh+h-2} \\ \beta_{kh+h-1} \end{pmatrix}  \mod p^k\,.
\ee
\end{itemize}
\end{theorem}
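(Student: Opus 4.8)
The plan is to read off a single recursion for the coefficients of $\log_F$ from the shortest functional equation, and then to extract all three statements from it by tracking $p$-adic valuations. Write $f=\log_F$, set $\gamma_N \= b_{p^N-1}/p^N = [x^{p^N}]f$, and recall from the discussion preceding the theorem that when $h<\infty$ there is a series $g\in R\lb x\rb$ with $f(x) = g(x) + \frac1p\sum_{s=1}^{h}\alpha_s\,(\sigma^s f)(x)$. Since $(\sigma^s f)(x)=\sum_{m\ge1}\sigma^s(b_{m-1}/m)\,x^{mp^s}$ and $\sigma(p)=p$, comparing coefficients of $x^{p^N}$ gives, for every $N\ge1$,
\[
\gamma_N \= g_{p^N} \+ \frac1p\sum_{s=1}^{\min(h,N)}\alpha_s\,\sigma^s(\gamma_{N-s})\,, \qquad g_{p^N}:=[x^{p^N}]g\in R\,.
\]
Throughout I would use that $\sigma$ is an isometry for $\ord_p$ (it fixes $p$ and sends units to units), and that $\alpha_s\in pR$ for $s<h$ while $\alpha_h\in R^\times$. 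Note that the quantity $\beta_N$ of the theorem equals $p^{\lfloor N/h\rfloor}\gamma_N$.

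For part (i) with $h<\infty$ I would prove $\ord_p(\gamma_N)\ge-\lfloor N/h\rfloor$ by induction, estimating the recursion term by term: for $s<h$ the factor $\alpha_s/p$ is integral, so that summand has valuation $\ge\ord_p(\gamma_{N-s})\ge-\lfloor(N-s)/h\rfloor\ge-\lfloor N/h\rfloor$; for $s=h$ it is $-1+\ord_p(\gamma_{N-h})\ge-\lfloor N/h\rfloor$; and $g_{p^N}$ contributes valuation $\ge0$. When $h\mid N$ the same bookkeeping shows that the $s=h$ term alone attains the minimal valuation $-N/h$, every other term being strictly larger; hence there is no cancellation and $\ord_p(\gamma_N)=-N/h$, which is the claimed equality. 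The case $h=\infty$ is immediate, since then $f\in R\lb x\rb$ forces $b_{p^n-1}/p^n\in R$.

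For part (ii) I would multiply the recursion at $N=kh$ by $p^{k}$ and use $\gamma_{kh-s}=p^{-(k-1)}\beta_{kh-s}$ for $1\le s\le h$ to obtain the integral identity $\beta_{kh}=p^k g_{p^{kh}}+\sum_{s=1}^{h}\alpha_s\sigma^s(\beta_{kh-s})$. Reducing modulo $p$ kills the first term and all $s<h$ terms, leaving $\beta_{kh}\equiv\alpha_h\,\sigma^h(\beta_{(k-1)h})$; in particular $k=1$ gives $\beta_h\equiv\alpha_h$ (as $\beta_0=1$). Substituting $\alpha_h\equiv\beta_h$ and iterating yields $\beta_{kh}\equiv\prod_{i=0}^{k-1}\sigma^{ih}(\beta_h)\bmod p$ by a one-line induction.

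Part (iii) is the main point and the place I expect the real work. For $N=kh+i$ with $0\le i\le h-1$ and $k\ge1$ the same scaling turns the recursion, after noting $p^k g_{p^{kh+i}}\equiv0$, into
\[
\beta_{kh+i}\;\equiv\;\sum_{s=1}^{i}\frac{\alpha_s}{p}\,\sigma^s(\beta_{kh+i-s})\+\sum_{s=i+1}^{h}\alpha_s\,\sigma^s(\beta_{kh+i-s})\pmod{p^k}\,;
\]
rewriting $\alpha_s=p\cdot(\alpha_s/p)$ for $i<s<h$ and putting $s=j+1$ exhibits this system, for $i=0,\dots,h-1$, as precisely $D_k\,(\alpha_1/p,\dots,\alpha_{h-1}/p,\alpha_h)^{\mathrm T}\equiv(\beta_{kh},\dots,\beta_{kh+h-1})^{\mathrm T}\bmod p^k$, the exponent $\varepsilon_{ij}$ recording exactly whether $\lfloor(kh+i-s)/h\rfloor$ equals $k$ or $k-1$. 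It then remains to compute $\det D_k$. Modulo $p$ the entries with $\varepsilon_{ij}=1$ vanish, so the surviving entries sit only strictly below the diagonal or in the last column; the unique permutation avoiding all zeros is the $h$-cycle $\pi(0)=h-1$, $\pi(i)=i-1$ $(i\ge1)$, of sign $(-1)^{h-1}$, whence $\det D_k\equiv(-1)^{h-1}\sigma^h(\beta_{(k-1)h})\prod_{i=1}^{h-1}\sigma^i(\beta_{kh})\bmod p$. Feeding in part (ii), the Frobenius exponents occurring are $\{jh:1\le j\le k-1\}$ from the first factor and $\{lh+i:0\le l\le k-1,\,1\le i\le h-1\}$ from the second; these are disjoint and together exhaust $\{1,\dots,kh-1\}$, giving $\det D_k\equiv(-1)^{h-1}\prod_{s=1}^{kh-1}\sigma^s(\beta_h)\bmod p$, a unit since $\beta_h\in R^\times$. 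The hardest bookkeeping is this last reorganization of the product and the verification that the claimed exponents tile the interval; once $\det D_k\in R^\times$ is known, $D_k$ is invertible over $R/p^kR$ and inverting the system yields formula~\eqref{padic_formula}.
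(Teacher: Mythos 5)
Your proposal is correct, and for part (iii) it coincides with the paper's argument step for step: the same scaled congruences $\beta_{kh+i}\equiv\sum_s \frac{\alpha_s}{p}p^{\tilde\varepsilon_{is}}\sigma^s(\beta_{kh+i-s})\bmod p^k$, the same matrix $D_k$, and the same reduction of $\det D_k$ to the subdiagonal entries times the corner entry (your explicit permutation argument, identifying the unique $h$-cycle $\pi(0)=h-1$, $\pi(i)=i-1$ of sign $(-1)^{h-1}$, is a slightly more careful version of the paper's "up to sign" remark, and your tiling of $\{1,\dots,kh-1\}$ by the exponents $\{jh\}$ and $\{i+jh\}$ is exactly the paper's final computation). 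Where you genuinely diverge is in (i) and (ii): the paper does \emph{not} use the characteristic polynomial there, but rather the functional equation supplied by Theorem~\ref{cong_fg_thm}, with $v_s=c_{p^s-1}/p^{s-1}$ built from the $p$-sequence; its recursion $\mu_n=\sum_{i=1}^{n}\kappa_i\,\sigma^i(\mu_{n-i})$ has terms of every order $i$, forcing a three-way case analysis ($i<h$, $i=h$, $i>h$ with $\ord_p(\kappa_i)\ge-1$), and the height enters through the intrinsic formula $h=\inf\{s:\ord_p(c_{p^s-1})=s-1\}$. You instead run the valuation induction on the recursion $\gamma_N=g_{p^N}+\frac1p\sum_{s=1}^{h}\alpha_s\sigma^s(\gamma_{N-s})$ coming from the shortest functional equation; this truncates the sum at $s=h$, makes the no-cancellation argument for equality when $h\mid N$ cleaner, and yields the useful byproduct $\beta_h\equiv\alpha_h\bmod p$ in (ii). The trade-off is that your route invokes the existence of $\Psi_F$ (Weierstrass preparation in $R_\sigma\lb T\rb$, via Honda/Hazewinkel) already for (i)--(ii), whereas the paper gets those parts from Theorem~\ref{cong_fg_thm} alone; since the theorem's hypotheses and the discussion preceding it already provide $\Psi_F$ and, for $h=\infty$, the fact $f\in R\lb x\rb$, nothing in your argument is circular, and all the small verifications (scaling by $p^{\lfloor N/h\rfloor}=p^k$, $\beta_{kh-s}=p^{k-1}\gamma_{kh-s}$ for $1\le s\le h$, $\sigma$ preserving $\ord_p$, invertibility of $D_k$ over $R/p^kR$ from $\det D_k\in R^\times$) check out.
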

 
\bigskip  
  
For example, it follows from (i) that the height is equal to $1$ if and only if $p \not | b_{p-1}$. In this case we have that $p \not | b_{p^k-1}$ for all $k \ge 1$ and there exists a unique unit $\alpha_1 \in R^\times$ such that for every $k \ge 1$
\be{unit_root}
b_{p^k-1}/\sigma(b_{p^{k-1}-1}) \;\equiv\; \alpha_1 \; \mod p^k \,.
\ee
The characteristic polynomial is given by $\Psi_F(T) \= p \- \alpha_1 \, T$. 

In the case of height~$2$, we have $\ord_p(b_{p^{2k}-1})=k$ and $\ord_p(b_{p^{2k-1}-1})\ge k$ for all $k$ by~(i), and~(iii) gives us the following formulas for the coefficients of the characteristic polynomial $\Psi_F(T) \= p - \alpha_1 T - \alpha_2 T^2$:
\[
\begin{pmatrix} \frac{\alpha_1}p \\ \alpha_2 \end{pmatrix} \; \equiv \;  \begin{pmatrix} p \, \frac{\sigma(b_{p^{2k-1}-1})}{p^k} &  \frac{\sigma^2(b_{p^{2k-2}-1})}{p^{k-1}} \\  \frac{\sigma(b_{p^{2k}-1})}{p^k} &  \frac{\sigma^2(b_{p^{2k-1}-1})}{p^k}  \end{pmatrix}^{-1} \begin{pmatrix}  \frac{b_{p^{2k}-1}}{p^k} \\  \frac{b_{p^{2k+1}-1}}{p^{k+1}}\end{pmatrix} \mod p^k
\]
or, equivalently, 
\be{ht_2}\bal
&\alpha_1 \;\equiv \; \quad \frac{\sigma^2(b_{p^{2k-1}-1})\, b_{p^{2k}-1}-\sigma^2(b_{p^{2k-2}-1})\, b_{p^{2k+1}-1}}{\sigma(b_{p^{2k-1}-1})\sigma^2(b_{p^{2k-1}-1}) - \sigma(b_{p^{2k}-1})\sigma^2(b_{p^{2k-2}-1})} \mod p^k \,,\\
&\alpha_2 \;\equiv\; \frac1p \; \frac{\sigma(b_{p^{2k-1}-1})\,b_{p^{2k+1}-1}-\sigma(b_{p^{2k}-1})\,b_{p^{2k}-1}}{\sigma(b_{p^{2k-1}-1})\sigma^2(b_{p^{2k-1}-1}) - \sigma(b_{p^{2k}-1})\sigma^2(b_{p^{2k-2}-1})} \mod p^k \,.
\eal\ee  

\bigskip

We prove Theorem~\ref{P_formulas_thm} in Section~\ref{sec:local_invariants}. Section~\ref{sec:applications} is devoted to applications of our theorems. We start with formal group laws attached to L-functions. In this case the $p$-sequence recovers coefficients of the respective local $L$-factor at $p$ and there are finitely many non-zero congruences~\eqref{c_cong_0} to be checked. In Section~\ref{sec:AM} we show that Theorem~\ref{cong_fg_thm} implies integrality of certain Artin--Mazur formal groups arising from cohomology of algebraic varieties. In this situation Theorem~\ref{P_formulas_thm} is useful for computation of eigenvalues of the Frobenius operator on the respective crystalline (or $\ell$-adic) cohomology group. In Section~\ref{sec:HG} we give a criterion of integrality and compute local characteristic polynomials of hypergeometric formal group laws generalizing the results in~\cite{Ho72_HG}. Here again the criterion in Theorem~\ref{cong_fg_thm} can be reduced to finitely many congruences.

\bigskip

{\bf Acknowledgement.} The criterion of integrality (Theorem~\ref{cong_fg_thm}) was initially conjectured by Eric Delaygue during our correspondence. I would like to thank Eric for numerous fruitful discussions of the subject. I am also grateful to Piotr Achinger and Susanne M\"{u}ller, whose remarks helped to improve the exposition.

\bigskip

{\bf Relation to the existing literature. } The approach to integrality of formal group laws via functional equations is classical. Its grounds were laid by Taira Honda in~\cite{Ho69}, and it is expressed in the most general form by the Hazewinkel's functional equation lemma and criterion of $p$-integrality (\cite[\S 2, 20]{H}). Our Theorem~1 can be viewed as an algorithmic version of this approach: we do a transform of the sequence of coefficients of the invariant differential (we call this transformed sequence the $p$-sequence), check the respective congruences and recover one of the functional equations whenever they exist. Though the $p$-sequence is a new combinatorial object, it arises naturally in the above context. A similar idea was used by Bert Ditters for the classification of commutative formal group laws over $p$-Hilbert domains: compare~\cite[(1.2) and (1.9)]{Dit89} to an equivalent definition of the $p$-sequence by formula~\eqref{p_seq} below. 

Our Theorem~2 is an explicit and general statement of the Atkin and Swinnerton-Dyer congruences for formal group laws. Its proof is rather straightforward and based on Honda's methods, as we explain in Section~\ref{sec:local_invariants}. The characteristic polynomial of a $p$-integral formal group law arises when one looks for ``the shortest possible functional equation''. Since a functional equation can be translated to the relation between the Frobenius and Verschiebung in the respective Cartier--Dieudonn\'{e} module (see e.g.~\cite{Dit89} and~\cite[Chapter V]{H}), this way one also recovers the minimal polynomial satisfied by the Frobenius endomorphism of the reduction of the formal group law modulo $p$.

\section{A criterion of integrality of a formal group law}\label{sec:integrality}

This section is devoted to the proof of Theorem~\ref{cong_fg_thm}. We recall that $R$ is a ring of characteristic zero endowed with a morphism $\sigma \in {\rm End}(R)$ satisfying $\sigma(r) \equiv r^p \mod pR$ for any $r \in R$. We extend $\sigma$ to a $p$th power Frobenius morphism of $R \otimes \Q$ by $\Q$-linearity and further to the ring of power series in $x$ with coefficients in $R \otimes \Q$ by assigning $\sigma(x)=x^p$. For a sequence $\{ b_n ; n \ge 0\}$ of elements of $R$ the respective $p$-\emph{sequence} was defined as
\be{p_seq_def}
c_n \= \sum_{\begin{tiny}\bal n&\=n_1*\ldots *n_k \\ n_1 \ge & \, 0, n_2,\ldots,n_k > 0 \eal\end{tiny}}  (-1)^{k-1} \; b_{n_1} \cdot \sigma^{\ell(n_1)}(b_{n_2})\cdot \sigma^{\ell(n_1)+\ell(n_2)}(b_{n_3}) \cdot \ldots \cdot \sigma^{\ell(n_1)+\ldots+\ell(n_{k-1})}(b_{n_k})\,.
\ee
When $k=1$ we have $n=n_1 \ge 0$ and the condition $n_2,\ldots,n_k>0$ is empty. This is a finite sum because $k \le \ell(n)$. We will start with discussing some properties of $p$-sequences. For small indices we have 
\[\bal
&c_0\=b_0\,,\quad c_1\=b_1\,,\quad \ldots\,, \quad c_{p-1}\=b_{p-1}\,,\quad \\
&c_p\= b_p \- b_0 \, \sigma(b_1)\,,\quad c_{1+p}\= b_{1+p} \- b_1 \, \sigma(b_1)\,, \quad \ldots \\
&c_{p^2}\= b_{p^2} \- b_0 \, \sigma(b_p)\,,\quad c_{1+p^2}\= b_{1+p^2} \- b_1 \, \sigma(b_p)\,, \quad \ldots \quad \\
& c_{p+p^2} \= b_{p+p^2} \- b_0 \, \sigma(b_{1+p}) \- b_p \, \sigma^2(b_1)\+ b_0 \, \sigma(b_1) \sigma^2(b_1)\,,\quad \ldots \\
\eal\]
One can easily notice that the original sequence $\{ b_n; n\ge0\}$ can be reconstructed from its $p$-sequence $\{ c_n; n\ge0\}$ since the right-hand side of~\eqref{p_seq_def} is the sum of $b_n$ and an expression containing only $b_k$ with $k<n$. 

\begin{lemma*} We have $b_0=c_0$ and 
\[
b_n \= \sum_{\begin{tiny}\bal n \= &n_1*(  \ldots *(n_{k-1}*n_k)\ldots) \\ & n_k > 0 \eal\end{tiny}} \; c_{n_1} \cdot \sigma^{\ell(n_1)}(c_{n_2})\cdot \sigma^{\ell(n_1)+\ell(n_2)}(c_{n_3}) \cdot \ldots \cdot \sigma^{\ell(n_1)+\ldots+\ell(n_{k-1})}(c_{n_k})
\]
for every $n > 0$.
\end{lemma*}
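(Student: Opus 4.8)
The plan is to recast both transformations as mutually inverse identities in a single non-commutative twisted power-series algebra built from $p$-adic digit strings, so that the inversion becomes a one-line algebraic manipulation. Let $\Sigma = \{0,1,\ldots,p-1\}$ and let $W$ be the free monoid of finite words over $\Sigma$, read from the low-order digit to the high-order digit, so that concatenation $u\cdot w$ places the digits of $w$ above those of $u$ and $\mathrm{val}(u\cdot w) = \mathrm{val}(u) + \mathrm{val}(w)\,p^{|u|} = \mathrm{val}(u) * \mathrm{val}(w)$, where $|{\cdot}|$ denotes word length. On the module $\mathcal{A}$ of formal $R$-linear combinations of words I put the twisted product $(r\,u)\star(r'\,w) = r\,\sigma^{|u|}(r')\,(u\cdot w)$, extended bilinearly and continuously. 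First I would check that $\mathcal{A}$ is an associative unital $R$-algebra: associativity reduces to $|u\cdot w| = |u|+|v|$ together with $\sigma^{|u|}\!\big(\sigma^{|v|}(\cdot)\big) = \sigma^{|u|+|v|}(\cdot)$, and the empty word is the two-sided identity.

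The key bookkeeping step is the dictionary between $R$-valued sequences and elements of $\mathcal{A}$ supported on standard words (words with nonzero top digit, together with the single-letter word $z$ consisting of the digit $0$, so that $\mathrm{std}(0)=z$). For $n\geq 0$ let $\mathrm{std}(n)$ be the standard word of $n$, so $|\mathrm{std}(n)| = \ell(n)$, and set $\Phi_0 = \sum_{n\geq 0} b_n\,\mathrm{std}(n)$, $\Phi_+ = \sum_{n\geq 1} b_n\,\mathrm{std}(n)$, and likewise $\Gamma_0,\Gamma_+$ built from the $p$-sequence $\{c_n\}$. The crucial observation is that a concatenation of standard words is again standard as soon as its top (highest) factor is a standard word of a positive integer; consequently every monomial occurring in $\Phi_0\star\Phi_+^{\star j}$ lands on a standard word $\mathrm{std}(n_1 * \cdots * n_{j+1})$ with $n_1\geq 0$ and $n_2,\ldots,n_{j+1}\geq 1$. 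Matching this against the defining sum of the $p$-sequence, and noting that $(1+\Phi_+)^{\star(-1)} = \sum_{j\geq 0}(-1)^j\,\Phi_+^{\star j}$ converges because $\Phi_+$ is topologically nilpotent, I would translate the definition of $\{c_n\}$ into the exact identity $\Gamma_0 = \Phi_0 \star (1+\Phi_+)^{\star(-1)}$ in $\mathcal{A}$.

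With this identity in hand the inversion is immediate. Rearranging gives $\Phi_0 - \Gamma_0 = \Gamma_0 \star \Phi_+$; since $c_0 = b_0$, the coefficients of the word $z$ on both sides cancel, so $\Phi_0 - \Gamma_0 = \Phi_+ - \Gamma_+$ and therefore $\Phi_+ = \Gamma_+ + \Gamma_0 \star \Phi_+$, that is, $\Phi_+ = (1-\Gamma_0)^{\star(-1)}\star\Gamma_+$. Reading off the coefficient of $\mathrm{std}(n)$ on both sides — where again the rightmost factor $\Gamma_+$ guarantees that every contributing monomial lands on a standard word, so the extraction is unambiguous and finite — recovers $b_n = \sum c_{n_1}\sigma^{\ell(n_1)}(c_{n_2})\cdots$ summed over decompositions $n = n_1 * \cdots * n_k$ with $n_1,\ldots,n_{k-1}\geq 0$ and $n_k > 0$; by associativity of $*$ these are exactly the right-nested decompositions in the statement, and the separate identity $b_0=c_0$ is the coefficient of $z$.

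I expect the main obstacle to be the combinatorial dictionary rather than the algebra: one must verify precisely that the intermediate products of standard words appearing in $\Phi_0\star(1+\Phi_+)^{\star(-1)}$ and in $(1-\Gamma_0)^{\star(-1)}\star\Gamma_+$ remain standard (equivalently, that $\ell$ is additive along each decomposition), and that the asymmetric constraints — lowest block $\geq 0$ in the forward transform versus highest block $>0$ in the inverse — correspond exactly to which factor carries the word $z$ (namely $\Phi_0/\Gamma_0$) and which is the positive part $\Phi_+/\Gamma_+$. This is where the non-commutativity of $\star$ does the real work, since it is what keeps the digit blocks ordered and prevents spurious cancellations coming from leading zeros.
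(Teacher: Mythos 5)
Your proof is correct, and it takes a genuinely different route from the paper's. The paper argues by direct iteration: isolating the topmost block in the defining sum gives the relation $c_n = b_n - \sum_{n=n_1*n_2,\, n_2>0} c_{n_1}\,\sigma^{\ell(n_1)}(b_{n_2})$, and repeatedly substituting this relation into itself unfolds the claimed formula for $b_n$. You instead algebraize: in your completed $\sigma$-twisted monoid algebra on digit words, both transforms collapse into the single identity $\Gamma_0 \star (1+\Phi_+) = \Phi_0$, whose coefficient on $\mathrm{std}(n)$ is exactly the paper's iteration relation, and the inversion is then the geometric series $\Phi_+ = (1-\Gamma_0)^{\star(-1)}\star\Gamma_+$, legitimate since $\Gamma_0$ and $\Phi_+$ are supported on words of length $\ge 1$ and hence topologically nilpotent. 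So the combinatorial core (factorizations of $\mathrm{std}(n)$ into standard blocks, with length-additivity guaranteed by a positive top block) is the same, but your packaging makes the inverse-pair structure transparent and explains \emph{structurally} the asymmetry of the constraints: in the forward transform only the lowest block may be $0$ because only the leftmost factor is $\Phi_0$ rather than $\Phi_+$, while in the inverse only the topmost block is forced positive because only the rightmost factor is $\Gamma_+$ rather than $\Gamma_0$ --- a point the paper's iteration leaves implicit. Two small cautions. First, your identity $\mathrm{val}(u\cdot w)=\mathrm{val}(u)*\mathrm{val}(w)$ holds only when $u$ is a standard word (otherwise $|u|>\ell(\mathrm{val}(u))$ and the twist exponents disagree); this is harmless since every left factor you ever use is standard, but it should be stated with that hypothesis. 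Second, your closing appeal to ``associativity of $*$'' is literally false: $(n*0)*m = n*m \ne n*(0*m)$, which is precisely why the lemma writes right-nested expressions. Your argument does not actually need it --- word concatenation is genuinely associative, and right-nested $*$ applied to tuples of standard expansions computes exactly the value of the concatenated word --- so the extraction step is sound once you phrase it that way. The cost of your approach is the setup (completion, the standard-word dictionary, invertibility); the benefit is a cleaner conceptual statement that would generalize, e.g., to Möbius-type inversions over other graded monoids with a Frobenius twist.
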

\begin{proof}Observe that
 \[\bal
c_n &\= \sum_{\begin{tiny}\bal n&\=n_1*\ldots *n_k \\ n_1 \ge & \, 0, n_2,\ldots,n_k > 0 \eal\end{tiny}}  (-1)^{k-1} \; b_{n_1} \cdot \sigma^{\ell(n_1)}(b_{n_2})\cdot \sigma^{\ell(n_1)+\ell(n_2)}(b_{n_3}) \cdot \ldots \cdot \sigma^{\ell(n_1)+\ldots+\ell(n_{k-1})}(b_{n_k}) \\
& \= \quad b_n \- \sum_{\begin{tiny}\bal n\=n_1 & *n_2 \\ n_2 > 0 \eal\end{tiny}} c_{n_1} \cdot \sigma^{\ell(n_1)}(b_{n_2})\,,
\eal\]
and therefore 
\[\bal
b_n &\= c_n \+ \sum_{\begin{tiny}\bal n &=n_1 *n_2 \\ &n_2 > 0 \eal\end{tiny}} c_{n_1} \cdot \sigma^{\ell(n_1)}(b_{n_2}) \\
&\= c_n \+ \sum_{\begin{tiny}\bal n &=n_1 *n_2 \\ & n_2 > 0 \eal\end{tiny}} c_{n_1} \cdot \sigma^{\ell(n_1)}(c_{n_2}) \+ \sum_{\begin{tiny}\bal n=n_1 & * (n_2*n_3) \\ & n_3 > 0 \eal\end{tiny}} c_{n_1} \cdot \sigma^{\ell(n_1)}(c_{n_2}) \cdot \sigma^{\ell(n_1)+\ell(n_2)}(b_{n_3})  \\
& \= \ldots \,,\\
\eal\]
which yields the statement of the lemma by iteration. 
\end{proof}

In what follows we will often use the following relation between the $p$-sequence and the original sequence, which we extract from the proof of the above lemma:
\be{p_seq}
b_n \= c_n \+ \sum_{\begin{tiny}\bal n &=n_1 *n_2 \\ &n_2 > 0 \eal\end{tiny}} c_{n_1} \cdot \sigma^{\ell(n_1)}(b_{n_2}) \,.
\ee  

\begin{proof}[Proof of Theorem~\ref{cong_fg_thm}.] $\Leftarrow$ Assume that~\eqref{c_cong_0} holds, so we have
\be{c_cong}
c_{m p^k-1} \in p^k R \;\text{  for all }\; m>1, k \ge 0\,.
\ee
For $s\ge 1$ consider elements $v_s = \frac1{p^{s-1}} c_{p^s-1}$ which lie in $R$ due to~\eqref{c_cong}. Consider the power series
\be{func_eq}
\sum_{n=1}^{\infty} d_n x^n \= f(x) \- \frac1{p} \sum_{s=1}^{\infty} v_s \cdot (\sigma^s f)(x)\,.
\ee
For each $n$, write $n=mp^k$ with $k \ge 0$ and $(m,p)=1$. If $k=0$ then $d_n=\frac1{n} b_{n-1} \in R \otimes \Z_{(p)}$. If $k>0$ we have 
\[\bal
d_n &\= \frac1{m p^k} b_{mp^k-1}  \- \frac1p \sum_{i=1}^k v_i \cdot \frac1{m p^{k-i}} \sigma^i (b_{mp^{k-i}-1})  \\
&\= \frac1{m p^k} \Bigl( b_{mp^k-1} \- \sum_{i=1}^k c_{p^i-1} \cdot \sigma^i(b_{m p^{k-i}-1})   \Bigr)\\
&\= \frac1{m p^k} \underset{\begin{tiny}\bal m  &=  m' * m'' \\ &m'>1\eal\end{tiny}}{\sum} c_{m' p^k - 1} \cdot \sigma^{k+\ell(m')}  (b_{m''})\,,
\eal\]
where the sum is over all possible decompositions $m = m'+m''p^{\ell(m')}$ with $m'>1$. If $m=1$ the sum is simply $0$. If $m>1$ then we have $c_{m'p^k-1} \in p^k R$ for every term in the sum due to~\eqref{c_cong}, and therefore $d_n \in R \otimes \Z_{(p)}$. We proved that~\eqref{func_eq} is a series  with coefficients in $R \otimes \Z_{(p)}$, and therefore $F(x,y)$ has coefficients in the same ring by Hazewinkel's functional equation lemma (\cite[\S 2.2]{H}).

\bigskip

$\Rightarrow$ Suppose $F(X,Y)$ has coefficients in $R \otimes \Z_{(p)}$. By~\cite[Proposition 20.1.3]{H} every formal group law over a $\Z_{(p)}$-algebra is of functional equation type. This means that there exist elements $v_1,v_2,\ldots \in R \otimes \Z_{(p)}$ such that the series
\[
\sum_{n=1}^{\infty} d_n x^n \= f(x) \- \frac1{p} \sum_{s=1}^{\infty} v_s \cdot (\sigma^s f)(x) 
\]
has coefficients in $R \otimes \Z_{(p)}$. Formulated in terms of the initial $R$-valued sequence $\{ b_n \}$ this functional equation means that for every $k \ge 0$ and $m \ge 1$, $m$ not divisible by $p$, we have
\be{v_seq}
b_{m p^k - 1} \- \sum_{s=1}^k p^{s-1} \, v_s \,  \sigma^s(b_{m p^{k-s}-1}) \= m p^k d_{m p^k} \in p^k \, R \otimes \Z_{(p)}\,. 
\ee

Let's first prove that $c_{p^k-1} \in p^{k-1} R$ for all $k \ge 1$. We shall first prove that
\be{c_via_v_and_d}
c_{p^k-1} \= p^{k-1} \, v_k \+ p^k d_{p^k} \- \sum_{i=1}^{k-1} \sigma^{i}(c_{p^{k-i}-1})\, p^{i} \, d_{p^{i}} \,.
\ee
For $k=1$ this formula follows from $b_{p-1} \= c_{p-1} \= v_1 \+ p \, d_p$. We will now do induction in~$k$. Subtracting the two expressions
\[
b_{p^k-1} \= c_{p-1} \, \sigma(b_{p^{k-1}-1}) \+ c_{p^2-1} \, \sigma^2(b_{p^{k-2}-1}) \+ \ldots \+ c_{p^k-1} 
\]
and
\[
b_{p^k-1} \= v_1 \, \sigma(b_{p^{k-1}-1}) \+ p \, v_2 \, \sigma^2(b_{p^{k-2}-1}) \+ \ldots \+ p^{k-1} \, v_k \+ p^{k} \, d_k 
\]
(coming from~\eqref{p_seq} and~\eqref{v_seq} respectively)  and using~\eqref{c_via_v_and_d} for all indices smaller than $k$  we obtain that
\[\bal
c_{p^k-1} \- p^{k-1}& \, v_k \- p^k d_{p^k} \= \sum_{i=1}^{k-1} (p^{i-1} v_i - c_{p^i-1})\, \sigma^i(b_{p^{k-i}-1}) \\
&\= \sum_{i=1}^{k-1} \Bigl(- p^i d_{p^i} \+ \sum_{j=1}^{i-1} \sigma^{j}(c_{p^{i-j}-1})p^{j} d_{p^{j}}\Bigr)\, \sigma^i(b_{p^{k-i}-1})\\
&\= - \sum_{i=1}^{k-1} p^i d_{p^i} \Bigl( \sigma^i(b_{p^{k-i}-1}) \- \sum_{j=1}^{k-i} \sigma^{i}(c_{p^j-1}) \sigma^{i+j}( b_{p^{k-i-j}-1})\Bigr)\\
&\= - \sum_{i=1}^{k-1} p^i d_{p^i} \, \sigma^i(c_{p^{k-i}-1})\,,
\eal\]
which proves~\eqref{c_via_v_and_d} for this~$k$. The fact that $c_{p^k-1} \in p^{k-1} R \otimes \Z_{(p)}$ follows from~\eqref{c_via_v_and_d} by induction on $k$. It remains to observe that $R \cap (p^{k} R \otimes \Z_{(p)}) = p^k R$ for every $k$.

\bigskip

So far we proved~\eqref{c_cong} with $m=p$ for all $k$. Now consider the case of $m>1$ not divisible by $p$. Suppose that for some $k>0$ and for all such $m$ we knew that 
\be{diff}\bal
 b_{m p^k - 1} \- c_{p-1} \cdot \sigma(b_{m p^{k-1}-1}) \- c_{p^2-1} \cdot & \sigma^2(b_{m p^{k-2}-1}) \-  \ldots \- c_{p^k-1} \cdot \sigma^k(b_{m-1}) \\
& \= \underset{\begin{tiny}\bal m = m' * m''\\m'>1, m'' \ge 0\eal\end{tiny}}\sum c_{m' p^k - 1} \cdot \sigma^{k+\ell(m')}(b_{m''})
\eal\ee
belongs to $p^{k} R$. Then~\eqref{c_cong} for this $k$ would follow by induction on the length $\ell(m)$.
 
\bigskip

It remains to prove that the left-hand side in~\eqref{diff} belongs to $p^k R$. Since it obviously belongs to $R$ and $R \cap (p^{k} R \otimes \Z_{(p)}) = p^k R$, it is enough to show that the left-hand side in~\eqref{diff} belongs to $p^k R  \otimes \Z_{(p)}$.  Consider $\widetilde f(x) = \sum_{k=0}^{\infty}\frac{b_{p^k-1}}{p^k}x^{p^k}$ and the $p$-typical formal group law $\widetilde{F}(x,y)= {\widetilde f}^{-1}(\widetilde f(x)+\widetilde f(y))$. Observe that $\widetilde F$ has coefficients in $R \otimes \Z_{(p)}$ because we have the functional equation
\[
\widetilde f(x)\- \frac1p\sum_{s=1}^{\infty} v_s \cdot (\sigma^s \, \widetilde  f)(x) \= \sum_{k=0}^{\infty} d_{p^k} \, x^{p^k}\,.
\]
By~\cite[Theorem 16.4.14 and Remark 16.4.15]{H} $\widetilde F$ is isomorphic to $F$ over $R \otimes \Z_{(p)}$. Observe that we have another functional equation for $\widetilde f$: with $v_s'=\frac{c_{p^s-1}}{p^{s-1}} \in R$ one has
\[
\widetilde f(x)\- \frac1p\sum_{s=1}^{\infty} v_s' \cdot (\sigma^s \, \widetilde  f)(x) \= x\,. 
\]
Since $F$ and $\widetilde F$ are isomorphic, by part (iii) of Hazewinkel's functional equation lemma we must have that the coefficients of the power series $f(x)\- \frac1p\sum_{s=1}^{\infty} v_s' \cdot (\sigma^s f)(x)$ also lie in $R \otimes \Z_{(p)}$, which is precisely what we wanted to prove.
\end{proof}

\section{Computing local invariants}\label{sec:local_invariants}

In this section $R$ is the ring of integers of a complete absolutely unramified discrete valuation field $K$ of characteristic zero and residue characteristic $p>0$, equipped with a Frobenius endomorphism $\sigma: K \to K$ which is the lift of the $p$th power endomorphism on the residue field $R/pR$. The valuation is denoted by $\ord_p : K \to \Z \cup {+\infty}$.

Let $\{ v_s; s \ge 1\}$ be a sequence of elements of $R$ and $g \in x + x^2R\lb x \rb$ be a power series. The functional equation
\[
g(x) \= f(x)\- \frac1p \, \sum_{s=1}^{\infty} v_s \, (\sigma^s f)(x) 
\]
allows one to recover $f(x) \in x + x^2 K\lb x \rb$. Hazewinkel's functional equation lemma tells us that the formal group law $F(x,y)=f^{-1}(f(x)+f(y))$ has coefficients in $R$ (\cite[\S 2.2 (i)]{H}). Moreover, since $R$ is a $\Z_{(p)}$-algebra, every formal group law over $R$ can be constructed this way (\cite[Proposition 20.1.3]{H}). Two formal group laws with the same $\{ v_s; s \ge 1\}$ and different $g(x)$ are strictly isomorphic over $R$ (\cite[\S 2.2 (ii)]{H}). Conversely, if a formal group law over $R$ satisfies a functional equation with some $\{ v_s; s \ge 1\}$ then every formal group law strictly isomorphic to it over $R$ satisfies a functional equation with the same $\{ v_s; s \ge 1\}$ but different $g(x)$ (\cite[\S 2.2 (ii) and (iii)]{H}).  

Honda (\cite{Ho69},\cite[\S20.3]{H}) gave the following method to describe all sequences $\{ v_s; s \ge 1\}$ with which a given formal group law $F(x,y)=f^{-1}(f(x)+f(y))\in R\lb x,y\rb$ satisfies 
\be{func_eq_log}
f(x)\- \frac1p \, \sum_{s=1}^{\infty} v_s \, (\sigma^s f)(x) \;\in\; R\lb x \rb\,.
\ee 
(As we just explained, the set of such sequences corresponds to the strict isomorphism class of $F$.) Denote by $K_{\sigma}\lb T \rb$ the ring of 'non-commutative' power series with coefficients in $K$, where the multiplication satisfies $T a = \sigma(a) T$ for $a \in K$. We put the word \emph{non-commutative} in commas because this ring is commutative whenever $\sigma$ is the identity endomorphism, e.g. when $K=\Q_p$. We denote by $R_{\sigma}\lb T \rb \subset K_{\sigma}\lb T \rb$ the subring of power series with coefficients in $R$. With~\eqref{func_eq_log} one associates an element $\eta_v = p \- \sum_{s=1}^{\infty} v_s T^s \in R_{\sigma}\lb T \rb$. Then every other sequence $v' = \{ v'_s; s \ge 1\}$ with which~\eqref{func_eq_log} also holds comes as $\eta_{v'} = \theta \eta_v$ where $\theta \in 1 \+ R_{\sigma}\lb T \rb T$.

The height of $F(x,y)$ can be computed as
\be{h_via_v}
h = \inf\{ s \ge 1 \;:\; v_s \in R^{\times}\}
\ee
and is invariant under strict isomorphisms. If $h < \infty$, then by the formal Weierstrass preparation lemma (\cite[Lemma 20.3.13]{H}) there exist a unique power series $\theta \in 1 \+ R_{\sigma}\lb T \rb T$ and elements $\alpha_1, \ldots, \alpha_{h-1} \in p R$, $\alpha_h \in R^{\times}$ such that $\theta \eta_v = p \- \sum_{i=1}^h \alpha_i T^i$. The Eisenstein polynomial in the right-hand side of this expression is the characteristic polynomial of $F(x,y)$. It was denoted by $\Psi_F(T)$ in Section~\ref{sec:intro}. The strict isomorphism classes of formal group laws of finite height over $R$ correspond bijectively to such polynomials (\cite[Theorem 20.3.12]{H}).   

\bigskip

\begin{proof}[Proof of Theorem~\ref{P_formulas_thm}.] Let $\{ c_n; n \ge 0\}$ be the $p$-sequence associated with $\{ b_n ; n \ge 0\}$. By Theorem~\ref{cong_fg_thm} we have $c_{p^s-1} \in p^{s-1} R$ for all $s \ge 1$ and~\eqref{func_eq_log} is satisfied with the sequence $v_s \= \frac{c_{p^s-1}}{p^{s-1}} \in R$. We can now rewrite~\eqref{h_via_v} as
\be{h_via_c}
h = \inf\{ s \ge 1 \;:\; \ord_p(c_{p^s-1}) = s-1 \}\,.
\ee

 (i) Let's denote $\mu_n = b_{p^n-1}/p^n$, $\kappa_n = c_{p^n-1}/p^n$. Dividing the identity  
\be{bc_id}
b_{p^n-1} \= c_{p-1} \, \sigma(b_{p^n-1}) \+ c_{p^2-1} \, \sigma^2(b_{p^{n-1}-1}) \+ \ldots \+ c_{p^n-1} 
\ee
by $p^n$ we obtain
\be{mu_kappa_id}
\mu_n \= \kappa_1 \, \sigma(\mu_{n-1}) \+ \kappa_2 \, \sigma(\mu_{n-2}) \+ \ldots \+ \kappa_{n-1} \, \sigma^{n-1}(\mu_1) \+ \kappa_n\,. 
\ee
When $h = \infty$ then $\kappa_n \in R$ for all $n$ due to~\eqref{h_via_c}, and therefore $\mu_n \in R$ for all $n$ by induction. From now on we assume that $h<\infty$. We then have  $\ord_p(\kappa_i)\ge 0$ for $i<h$, $\ord_p(\kappa_h)=-1$ and $\ord_p(\kappa_i)\ge -1$ for $i>h$.
It follows that $\ord_p(\mu_i)\ge 0$ when $i<h$ and $\ord_p(\mu_h)=-1$. We will prove by induction that $\ord_p(\mu_n) \ge -\lfloor \frac nh \rfloor$ with equality when $h|n$. Suppose this inequality holds for all indices less than $n$. Then for $1 \le i <h$ we have $\ord_p(\kappa_i \, \sigma^i(\mu_{n-i}))=\ord_p(\kappa_i)+\ord_p(\mu_{n-i})\ge0-\lfloor \frac{n-i}h\rfloor \ge -\lfloor \frac nh \rfloor$ with strict inequality when $h | n$. For $i>h$ we have $\ord_p(\kappa_i\, \sigma^i(\mu_{n-i}))=\ord_p(\kappa_i)+\ord_p(\mu_{n-i})\ge -1-\lfloor \frac{n-i}h\rfloor = -\lfloor \frac{n-(i-h)}h\rfloor  \ge -\lfloor \frac nh \rfloor$, where the last inequality is again strict when $h|n$. When $i=h$ we have
\[
\ord_p(\kappa_h\, \sigma^h(\mu_{n-h}))=\ord_p(\kappa_h)+\ord_p(\mu_{n-h}) \begin{cases} \= -1 -\lfloor \frac {n-h}h \rfloor \= -\lfloor \frac nh \rfloor\, & \text{ when } h|n \,, \\
\;\ge\; -1 -\lfloor \frac {n-h}h \rfloor \= -\lfloor \frac nh \rfloor\, & \text{ when } h \not |n \,.\end{cases}
\]
Summarizing the above we get that $\ord_p(\mu_n) \ge -\lfloor \frac nh \rfloor$ with equality when $h|n$, which proves the first part of the theorem. 

(ii) We have $\beta_{kh}=p^k \mu_{kh}$. From~\eqref{mu_kappa_id} with $n=h$ we find that $p \mu_h \equiv p \kappa_h \mod p$.
Computation in part~(i) shows that modulo $p$ 
\[
p^{k} \mu_{kh} \equiv p^k \kappa_h \,\sigma^h( \mu_{(k-1)h}) \= (p \kappa_h) (p^{k-1}\, \sigma^h(\mu_{(k-1)h})) \equiv \ldots \equiv \prod_{i=0}^{k-1}(p \, \sigma^{ih}(\kappa_h)) \equiv \prod_{i=0}^{k-1}\sigma^{ih}(\beta_h) \,.
\]

(iii) The characteristic polynomial provides us with the shortest possible functional equation~\eqref{func_eq_log} where $v_s = \alpha_s$ for $s\le h$ and $v_s=0$ for $s > h$. Therefore for every $n \ge 0$
\[
p^n \quad | \quad b_{p^n-1} \- \sum_{s=1}^h p^{s-1} \, \alpha_s \, \sigma^s( b_{\,p^{n-s}-1})\,.
\]
Let's consider the last congruence for $n=kh+i$ where $0 \le i < h$ and divide it by $p^{k(h-1)+i}$: 
\[
p^k \quad | \quad \frac{b_{p^{kh+i}-1}}{p^{k(h-1)+i}} \- \sum_{s=1}^h \frac{\alpha_s}{p} \, \, \frac{\sigma^s(b_{\,p^{kh+i-s}-1})}{p^{k(h-1)+i-s}}\,.
\]
With the notation $\beta_n=b_{p^n-1}/p^{n-\lfloor\frac nh \rfloor}$ we rewrite the last congruence as
\[
p^k \quad | \quad \beta_{kh+i} \- \sum_{s=1}^{h} \frac{\alpha_s}{p} \, p^{\tilde \varepsilon_{is}} \, \sigma^s(\beta_{kh+i-s}) \quad \text{ with }\quad \tilde\varepsilon_{is} \= \begin{cases} 0\,, &s\le i \,,\\ 1\,, & s > i\,.\end{cases} 
\]
Substituting $s=j+1$ and combining all congruences with $i=0,\ldots,h-1$ we obtain
\[
p^k \quad | \quad \begin{pmatrix} \beta_{kh}\quad\;\; \\ \beta_{kh+1}\quad \\ \vdots \\ \beta_{kh+h-2} \\ \beta_{kh+h-1} \end{pmatrix} \- D_k \, \begin{pmatrix} \alpha_1/p \\ \alpha_2/p \\ \vdots \\ \alpha_{h-1}/p \\ \alpha_h \end{pmatrix}\,,
\]
where $D_k$ is the matrix defined in the statement of part~(iii). By~(i) all $\beta_n \in R$, hence all entries of $D_k$ belong to $R$. Therefore it only remains to check that $p \not | \det(D_k)$.
The entries of $D_k$ are 0 modulo $p$ when $\varepsilon_{ij}=1$, that is when $i \le j < h-1$. It means that modulo $p$ the determinant is congruent (up to sign) to the product of the entries under the main diagonal, which are all equal to $\sigma^i(\beta_{kh})$ for $i=1,\ldots,h-1$, times the upper-right entry, which is equal to $\sigma^{h}(\beta_{(k-1)h})$. Using~(ii) we obtain
\[\bal
\det D_k & \equiv (-1)^{h-1} \, \prod_{i=1}^{h-1} \sigma^i(\beta_{kh}) \; \cdot \sigma^h( \beta_{(k-1)h}) \\
& \equiv (-1)^{h-1} \, \prod_{i=1}^{h-1}\prod_{j=0}^{k-1} \sigma^{i+jh}(\beta_{h}) \; \cdot \prod_{j=0}^{k-2}\sigma^{h(1+j)}( \beta_{h}) \= (-1)^{h-1} \,\prod_{s=1}^{kh-1}\sigma^s(\beta_h) \mod p\,. 
\eal\]
\end{proof}

\section{Applications and examples}\label{sec:applications}

We shall now demonstrate how Theorems~\ref{cong_fg_thm} and~\ref{P_formulas_thm} work in three situations. The result of \S~\ref{sec:Lfun} below (Proposition~3) is essentially contained in~\cite[\S 6]{Ho69}. We included this section to demonstrate that our criterion of integrality works particularly well for formal group laws attached to L-functions. In \S~\ref{sec:AM} we discuss Artin--Mazur formal group laws of hypersurfaces. This is the context in which $p$-sequences were invented. In \S~\ref{sec:HG} we extend the results of~\cite{Ho72_HG} to a broad class of hypergeometric formal group laws. We follow the ideas of~\cite{Ho72_HG}, but application of our main theorems allows to make the proofs shorter and hopefully more transparent.

\subsection{Formal group laws attached to L-functions}\label{sec:Lfun}
 
For the purposes of this paper, an \emph{L-function} is a Dirichlet series  
\[
L(s) \= \sum_{n=1}^{\infty}\frac{a_n}{n^s}
\]
with integral coefficients $a_n \in \Z$, which has an Euler product    
\[
L(s) \= \prod_{p \text{ prime }} \mathcal{P}_p(p^{-s})^{-1} \,,
\]
where $\mathcal{P}_p(T)\in 1+T \, \Z[T]$ is a polynomial for every prime number $p$. The Euler product is understood formally, that is we don't care about convergence and the sequence $\{ a_n ; n \ge 1\}$ is determined by the rules:
\be{Lfun_coeffs}\bal
&a_{mn}=a_m a_n \quad\text{ when }\quad(m,n)=1\,,\\
&a_{p^s} \+ \gamma_1(p) \, a_{p^{s-1}} \+ \ldots \+ \gamma_{d(p)}(p) \, a_{p^{s-d}}=0 \quad\text{ where }\quad \mathcal{P}_p(T)\=1+\sum_{i=1}^{d(p)} \gamma_i(p) T^i\,.
\eal\ee
(By convention, we assume $a_k=0$ when $k \not\in \Z_{\ge 1}$.)

 To an L-function one associates an one-dimensional formal group law over $\Q$ by the formula
\be{Lfun_fg_def}
F_L(x,y)=f^{-1}(f(x)+f(y)) \,,  \qquad  f(x)=\sum_{n=1}^{\infty} \frac{a_n}n \, x^n\,.
\ee  
We shall now prove

\begin{proposition}\label{Lfun_thm} For a fixed prime $p$, we have $F \in \Z_{(p)}\lb x,y \rb$ if and only if 
\be{Lfun_fg_cong}
Q(T) := p \, \mathcal{P}_p\Bigl(\frac {T}p \Bigr) \;\in \; p + T \, \Z[T] \,.
\ee

Assume that~\eqref{Lfun_fg_cong} holds. If $Q \= 0 \mod p\;$ then $F$ is strictly isomorphic to $\mathbb{G}_a$ over $\Z_{(p)}$, and otherwise the local characteristic polynomial $\Psi_F(T)$ at $p$ is equal to the unique monic Eisenstein factor of $Q(T)$ in the ring $\Z_p[T]$ of polynomials with $p$-adic integral coefficients and the height $h = \deg \Psi_F$ is equal to the highest power of $T$ that divides $\overline{Q}(T) = Q(T) \mod p$.  
\end{proposition}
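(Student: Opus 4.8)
The plan is to run everything through Theorem~\ref{cong_fg_thm}, after identifying the part of the associated $p$-sequence that governs integrality with the coefficients of the local Euler factor $\mathcal P_p$. We are in the situation $R=\Z$, $\sigma=\mathrm{id}$ (so $(\sigma^s f)(x)=f(x^{p^s})$), and in the notation of Theorem~\ref{cong_fg_thm} we have $b_n=a_{n+1}$, whence $b_{p^k-1}=a_{p^k}$.

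The main computational step is to evaluate the $p$-sequence at the indices $n=p^k-1$. Such an $n$ has $p$-adic expansion equal to $k$ digits all equal to $p-1$, so every decomposition $p^k-1=n_1*\cdots*n_m$ cuts this string into nonempty blocks and forces $n_i=p^{\ell_i}-1$ with $\ell_i\ge1$ and $\ell_1+\cdots+\ell_m=k$. Since $\sigma=\mathrm{id}$ and $b_{p^\ell-1}=a_{p^\ell}$, the definition of the $p$-sequence collapses to a sum over compositions of $k$, and introducing $\phi(t)=\sum_{\ell\ge1}a_{p^\ell}t^\ell$ together with the Euler-product identity $1+\phi(t)=\sum_{\ell\ge0}a_{p^\ell}t^\ell=\mathcal P_p(t)^{-1}$ I would sum the resulting geometric series to obtain $\sum_{k\ge1}c_{p^k-1}t^k=\phi/(1+\phi)=1-\mathcal P_p(t)$. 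Writing $\mathcal P_p(T)=1+\sum_i\gamma_iT^i$, this reads $c_{p^k-1}=-\gamma_k$ for all $k\ge1$.

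With this identity both directions of the integrality criterion are short. Expanding $Q(T)=p\,\mathcal P_p(T/p)=p+\sum_{s\ge1}\gamma_sp^{1-s}T^s$, condition~\eqref{Lfun_fg_cong} is equivalent to $\gamma_s\in p^{s-1}\Z$ for all $s$, i.e. to $v_s:=c_{p^s-1}/p^{s-1}=-\gamma_sp^{1-s}\in\Z_{(p)}$. For ``$\Rightarrow$'' I would apply Theorem~\ref{cong_fg_thm}: taking $m=p$ in~\eqref{c_cong_0} gives $c_{p^k-1}\in p^{k-1}\Z$ for all $k$, which is exactly~\eqref{Lfun_fg_cong}. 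For ``$\Leftarrow$'' I would exhibit an explicit functional equation: using multiplicativity $a_{mp^j}=a_ma_{p^j}$ and the recursion~\eqref{Lfun_coeffs}, the coefficient of $x^{mp^k}$ ($p\nmid m$, $k\ge1$) in $f(x)-\tfrac1p\sum_s v_s(\sigma^s f)(x)$ equals $\tfrac{a_m}{mp^k}\bigl(a_{p^k}+\sum_{s=1}^k\gamma_s a_{p^{k-s}}\bigr)=0$, so this series equals $\sum_{p\nmid n}\tfrac{a_n}{n}x^n$; when~\eqref{Lfun_fg_cong} holds both the $v_s$ and this series are $p$-integral, and Hazewinkel's functional equation lemma (\cite[\S 2.2]{H}) gives $F\in\Z_{(p)}\lb x,y\rb$.

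For the characteristic polynomial and the height I would pass to the completion $\Z_p$, where $\sigma=\mathrm{id}$ makes Honda's twisted power-series ring commutative, and observe that the functional-equation element attached to the data $\{v_s\}$ above is exactly $\eta_v=p-\sum_s v_sT^s=Q(T)$, from which $\Psi_F$ is produced by Weierstrass preparation. Then the height is $h=\inf\{s:v_s\in\Z_p^\times\}=\inf\{s:\ord_p\gamma_s=s-1\}$, which is the order of vanishing of $\overline Q=Q\bmod p$ at $T=0$, i.e. the largest power of $T$ dividing $\overline Q$; if no such $s$ exists then $\overline Q=0$, $h=\infty$, and $F$ is strictly isomorphic to $\mathbb G_a$. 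Otherwise the coefficients $\gamma_sp^{1-s}$ have valuation $\ge1$ for $s<h$ and valuation $0$ for $s=h$, so the Newton polygon of $Q$ has a single initial edge of slope $-1/h$ from $(0,1)$ to $(h,0)$; applying the formal Weierstrass preparation lemma (\cite[Lemma 20.3.13]{H}) to $\eta_v=Q$ then splits off the degree-$h$ Eisenstein factor, which is $\Psi_F$. The point I expect to require the most care is the normalisation: Honda's $\Psi_F$ is pinned down by its constant term being $p$ rather than by being monic, so matching it against the monic Eisenstein factor of $Q$ is a bookkeeping step involving the leading unit; this, together with checking that the Newton edge really has no intermediate vertices, is the only genuinely delicate part, the two computations above being otherwise routine.
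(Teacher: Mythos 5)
Your proof is correct, and it overlaps with the paper's proof in its skeleton while replacing both halves of the paper's key lemma with different arguments. Where you coincide: necessity via Theorem~\ref{cong_fg_thm} with $m=p$, and the identification $\eta_v=p-\sum_s v_sT^s=Q(T)$ followed by formal Weierstrass preparation for the characteristic polynomial and the height --- this last part is exactly the paper's one-line conclusion, and your Newton-polygon discussion and the remark about the monic-versus-constant-term-$p$ normalisation (the paper's $\Psi_F$ is the monic Eisenstein factor times the unit $-\alpha_h$) make explicit what the paper leaves implicit. Where you genuinely differ: (1) you compute $c_{p^k-1}=-\gamma_k$ by observing that $p^k-1$ has all digits $p-1$, so the defining sum collapses to a sum over compositions of $k$, and then summing the geometric series $\sum_m(-1)^{m-1}\phi^m=1-\mathcal P_p(t)$; the paper instead proves this recursively (part~(ii) of its lemma) from the identities $a_{p^k}=c_{p-1}a_{p^{k-1}}+\ldots+c_{p^k-1}$. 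Your generating-function route is slicker but strictly equivalent. (2) For sufficiency the paper verifies the full condition~\eqref{c_cong_0} of Theorem~\ref{cong_fg_thm}, which requires its lemma part~(i) showing $c_{mp^k-1}=0$ for $m>1$ prime to $p$, $k>0$ (an induction on $\ell(m)$ using multiplicativity); you bypass this entirely by exhibiting the explicit functional equation $f(x)-\frac1p\sum_s v_s(\sigma^sf)(x)=\sum_{p\nmid n}\frac{a_n}{n}x^n$ and invoking Hazewinkel directly --- this is essentially Honda's original argument in~\cite{Ho68}, shorter in this special case, whereas the paper's route has the side benefit of computing the whole $p$-sequence (the vanishing statement~\eqref{Lfun_p_seq} is of independent interest and is reused in its examples). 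Two cosmetic loose ends in your write-up, neither fatal: in the $\overline Q=0$ case the statement asserts strict isomorphism to $\mathbb{G}_a$ over $\Z_{(p)}$, not just $\Z_p$, which follows because all $v_s\in p\Z_{(p)}$ forces $f\in\Z_{(p)}\lb x\rb$ by induction on coefficients in the functional equation; and the uniqueness of the monic Eisenstein factor of $Q$ deserves the one-line root-valuation argument (its roots would have valuation $1/e$, but the only roots of $Q$ of positive valuation are the $h$ roots of valuation $1/h$, forcing $e=h$), after which the ``no intermediate vertices'' check you flagged as delicate is immediate since the interior points $(s,\ord_p v_s)$ with $s<h$ have ordinate $\ge1$, strictly above the segment from $(0,1)$ to $(h,0)$.
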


The following lemma enables us to describe the $p$-sequence associated to the (shifted) sequence of coefficients of an L-function.

\begin{lemma*}
\label{pseq} Let $\{ a_n ; n \ge 1\}$ be a sequence of integers with $a_1=1$. Fix a prime number $p$. Let $\{ c_n ; n \ge 0\}$ be the $p$-sequence associated to the shifted sequence $\{ b_n = a_{n+1} ; n \ge 0 \}$. 
\begin{itemize}
\item[(i)] We have
\be{Lfun_mult_prop}
a_{m p^k} = a_m a_{p^k} \quad \text{ for all  } m \text{ not divisible by } p  \text{ and }
k \ge 0 \ee
if and only if 
\be{Lfun_p_seq}
c_{m p^k -1} = 0  \qquad \text{ for all  } m>1 \text{ not divisible by } p  \text{ and }
k > 0\,.
\ee

\item[(ii)] There is a recurrence relation of the form
\be{rec_rel}
a_{p^{k}} \+ \gamma_1 \, a_{p^{k-1}} \+ \ldots \+ \gamma_d \, a_{p^{k-d}} \= 0 \quad \text{ for all } k \ge 0 
\ee
with some $d \ge 1$ and integers $\gamma_1, \ldots, \gamma_d$ (where, by convention,  $a_n=0$ when $n \not \in \Z$) if and only if
\[
c_{p^i-1} \= \begin{cases} -\gamma_{i} \,, &1 \le i \le d \,, \\ \quad 0\,, & i > d\,. \end{cases}
\]
\end{itemize}
\end{lemma*}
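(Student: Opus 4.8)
The plan is to reduce both parts to the algebraic identity~\eqref{diff} proved in Section~\ref{sec:integrality} (itself a direct consequence of the reconstruction relation~\eqref{p_seq}). Over $R=\Z$ the only ring endomorphism is the identity, so here $\sigma=\mathrm{id}$ and every Frobenius twist in the $p$-sequence disappears. Substituting $\sigma=\mathrm{id}$ and $b_n=a_{n+1}$ into~\eqref{diff} turns it into the master identity
\[
a_{mp^k} \= \sum_{i=1}^{k} c_{p^i-1}\,a_{mp^{k-i}} \+ \sum_{\substack{m=m'*m''\\ m'>1,\,m''\ge 0}} c_{m'p^k-1}\,a_{m''+1}\,,
\]
valid for every $m\ge 1$ coprime to $p$ and every $k\ge 1$. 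Its $m=1$ case has empty second sum and reads $a_{p^k}=\sum_{i=1}^{k}c_{p^i-1}\,a_{p^{k-i}}$, which is precisely~\eqref{bc_id} with $\sigma=\mathrm{id}$. The entire argument amounts to feeding the two hypotheses into this identity; the one point requiring care is that every $m'$ occurring in the second sum is a low segment of $m$, hence shares the nonzero lowest $p$-adic digit of $m$ and is therefore again coprime to $p$.

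For part~(i), direction $(\Leftarrow)$: under~\eqref{Lfun_p_seq} each coefficient $c_{m'p^k-1}$ in the second sum vanishes (here $m'>1$ is coprime to $p$ and $k>0$), so the identity collapses to $a_{mp^k}=\sum_{i=1}^{k}c_{p^i-1}\,a_{mp^{k-i}}$. I then induct on $k$ (the case $k=0$ being trivial): by the inductive hypothesis applied to exponents $<k$ one has $a_{mp^{k-i}}=a_m\,a_{p^{k-i}}$ for $1\le i\le k$, and the $m=1$ recurrence gives $\sum_{i=1}^{k}c_{p^i-1}\,a_{p^{k-i}}=a_{p^k}$, whence $a_{mp^k}=a_m\,a_{p^k}$. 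For the direction $(\Rightarrow)$: under~\eqref{Lfun_mult_prop} the first sum equals $a_m\sum_{i=1}^{k}c_{p^i-1}\,a_{p^{k-i}}=a_m\,a_{p^k}=a_{mp^k}$, so the second sum must vanish identically for every $m>1$ coprime to $p$ and every $k>0$. Fixing $k$ and inducting on the number $\ell(m)$ of $p$-adic digits of $m$, the only term of that sum with $m''=0$ is $c_{mp^k-1}\,a_1=c_{mp^k-1}$, while each term with $m''>0$ has $\ell(m')<\ell(m)$ and so vanishes by the inductive hypothesis; therefore $c_{mp^k-1}=0$. This isolation of a single coefficient from a sum ranging over all low-segment decompositions of $m$ is the main obstacle, and it is exactly what the induction on $\ell(m)$ resolves.

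For part~(ii) I package the $m=1$ recurrence as a generating-function identity. With $A(X)=\sum_{k\ge 0}a_{p^k}X^k$ and $C(X)=\sum_{i\ge 1}c_{p^i-1}X^i$, the relation $a_{p^k}=\sum_{i=1}^{k}c_{p^i-1}\,a_{p^{k-i}}$ for $k\ge 1$ together with $a_1=1$ is equivalent to $A(X)\bigl(1-C(X)\bigr)=1$. On the other hand, setting $\Gamma(X)=1+\sum_{i=1}^{d}\gamma_i X^i$, the recurrence~\eqref{rec_rel} states that the coefficient of $X^k$ in $\Gamma(X)A(X)$ vanishes for each $k\ge 1$, while its constant term is $\gamma_0 a_1=1$; hence~\eqref{rec_rel} is equivalent to $\Gamma(X)A(X)=1$. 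Comparing the two identities (and using that $A$ is invertible), the recurrence holds if and only if $\Gamma(X)=1-C(X)$, i.e. $c_{p^i-1}=-\gamma_i$ for $1\le i\le d$ and $c_{p^i-1}=0$ for $i>d$. The only wrinkle is that the recurrence cannot hold at $k=0$ (there it would read $a_1=0$), so it is to be understood for $k\ge 1$; I will flag this when writing up the final statement.
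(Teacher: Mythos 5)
Your proposal is correct and takes essentially the same route as the paper: your ``master identity'' is exactly the paper's identity~\eqref{rl1} (i.e.~\eqref{diff} specialized to $\sigma=\mathrm{id}$, $b_n=a_{n+1}$), both directions of (i) proceed by the same subtraction argument together with induction on $k$ and on $\ell(m)$, and your generating-function treatment of (ii) merely formalizes the paper's one-line deduction from $a_{p^k}=\sum_{i=1}^{k}c_{p^i-1}a_{p^{k-i}}$ and $a_1=1$. Your flag that~\eqref{rec_rel} should be read for $k\ge 1$ (at $k=0$ it would force $a_1=0$) is a correct reading of the intended statement, consistent with the paper's own proof.
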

\begin{proof} (i) $\Rightarrow$ First we prove that~\eqref{Lfun_p_seq} follows from~\eqref{Lfun_mult_prop}. Take any $m>1$ not divisible by $p$ and $k > 0$. We have
\be{rl1}
b_{mp^k-1} \= c_{p-1}\, b_{mp^{k-1}-1} \+ \ldots \+ c_{p^k-1} \, b_{m-1} \+ \sum_{\begin{tiny}\bal & m = m' * m'' \\ &m'>1 , m'' \ge 0\eal\end{tiny}} c_{m' p^k -1} \, b_{m''}\,,
\ee
where the sum runs over all decompositions $m=m'\+p^{\ell(m')}m''$ of the $p$-adic expansion of $m$ into a concatenation of $p$-adic expansions of two non-negative integers, of which $m'$ is actually positive because $k>0$. The condition $m'>1$ is then satisfied automatically due to the fact that $m>1$ is not divisible by $p$. Since $b_{mp^i-1}=b_{p^i-1}\,b_{m-1}$ for $i \ge 0$ due to~\eqref{Lfun_mult_prop}, subtracting from~\eqref{rl1} the equality $b_{p^k-1}=c_{p-1}\,b_{p^{k-1}-1}+\ldots+c_{p^k-1}$ multiplied by $b_{m-1}$ yields
\[
\sum_{\begin{tiny}\bal m = m' * m'' \\ m'>1, m'' \ge 0 \eal\end{tiny}} c_{m' p^k -1} \, b_{m''} \= 0\,.
\]
If $m$ had one $p$-adic digit the sum here would have just one term $c_{mp^k-1} b_0 = c_{mp^k-1}$, so we get $c_{mp^k-1}=0$. Now \eqref{Lfun_p_seq} follows by induction on the length $\ell(m)$ of the $p$-adic expansion of $m$.

$\Leftarrow$ Suppose now that~\eqref{Lfun_p_seq} holds. When $m=1$ or $k=0$ ~\eqref{Lfun_mult_prop} holds automatically. Take any $m>1$ not divisible by $p$ and $k>0$ and consider~\eqref{rl1} once again. The rightmost sum then vanishes due to~\eqref{Lfun_p_seq} since $m' \equiv m \ne 0 \mod p$ for each term. Therefore we get
\[
b_{mp^k-1} \= c_{p-1}\, b_{mp^{k-1}-1} \+ \ldots \+ c_{p^k-1} \, b_{m-1}\,.
\] 
If $k=1$ we have $b_{mp-1}=c_{p-1}b_{m-1}=b_{p-1}b_{m-1}$. For $k>1$ we proceed by induction:
\[
b_{mp^k-1} \= ( c_{p-1}\, b_{p^{k-1}-1} \+ \ldots \+ c_{p^k-1} )\, b_{m-1} \= b_{p^k-1} \, b_{m-1}\,,
\] 
which proves~\eqref{Lfun_mult_prop}. 

(ii) follows immediately from the equalities $a_{p^k} \= c_{p-1} a_{p^{k-1}} \+ c_{p^k-1} a_{p^{k-2}} \+ \ldots \+ c_{p^k-1}$ for every $k \ge 1$ and $a_1=1$.
\end{proof}

\begin{proof}[Proof of Proposition~\ref{Lfun_thm}.]  Let $\{ c_n; n \ge 0\}$ be the $p$-sequence associated with $\{ b_n=a_{n+1}; n \ge 0\}$. By~\eqref{Lfun_coeffs} and part~(i) in the preceding lemma we have $c_{mp^k-1}=0$ for every $m>1$ not divisible by $p$ and any $k>0$. By~\eqref{Lfun_coeffs} and part~(ii) in the preceding lemma we have $c_{p^k-1}=-\gamma_k(p)$ for $k \ge 1$. Hence condition~\eqref{c_cong_0} in Theorem~\ref{cong_fg_thm} is equivalent to $p^{k-1}|\gamma_k(p)$ for $k \ge 1$, which is in turn equivalent to~\eqref{Lfun_fg_cong} because $Q(T)=p \+ \sum_{i=1}^{d(p)}\frac{\gamma_i(p)}{p^{i-1}}T^i$. The characteristic polynomial is the unique Eisenstein factor of $Q(T)$ by the construction reminded at the beginning of Section~\ref{sec:local_invariants}: we have $Q=\eta_{v}$ for the sequence $v=\{v_i = \frac{c_{p^i-1}}{p^{i-1}}; i \ge 1\}$.

\end{proof}

\subsection{Artin--Mazur formal groups}\label{sec:AM}

In~\cite{AM77} Artin and Mazur associated formal groups to cohomology groups of algebraic schemes. In~\cite{St87} Stienstra computed explicit coordinalizations of Artin--Mazur functors related to the middle cohomology of complete intersections and double coverings of projective spaces. Stienstra's formal group laws are integral over the base ring and coefficients of their logarithms are always given as coefficients of powers of a  polynomial (see Theorems~1 and~2 in \emph{loc. cit.}). 

\begin{proposition}\label{Dwork_cong_prop}
Let $R$ be a characteristic~0 ring and $V(x) \in R[x_1^{\pm},\ldots,x_m^{\pm}]$ be a Laurent polynomial. Assume that Newton polytope $\Delta(V) \subset \R^m$ contains a unique internal integral point $\{ w \} = \Delta(V)^{\circ} \cap \Z^m$. Consider the $R$-valued sequence
\[
b_n \= \text{ the coefficient of } x^{n w} \text{ in } V(x)^n\,. 
\]
Assume that $R$ equipped with a $p$th power Frobenius endomorphism and let $\{ c_n ; n \ge 0\}$ be the $p$-sequence attached to $\{ b_n ; n \ge 0\}$. Then one has 
\be{Laurent_poly_cong}
c_n \in p^{\ell(n)-1}R
\ee
for every $n \ge 0$.
\end{proposition}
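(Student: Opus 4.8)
The plan is to reduce the statement to the classical Dwork congruences for constant terms of powers of $V$, proved via the Cartier operator, and then to repackage those congruences through the inversion relation~\eqref{p_seq} into the divisibility $c_n \in p^{\ell(n)-1}R$.

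First I would normalize the interior point away. Since $b_n$ is the coefficient of $x^{nw}$ in $V^n$, it equals the constant term $\mathrm{CT}\big((Vx^{-w})^n\big)$, so replacing $V$ by $Vx^{-w}$ (which does not change the sequence $\{b_n\}$) I may assume $w=0$; thus $\Delta(V)$ has the origin as its unique interior lattice point and $b_n = \mathrm{CT}(V^n)$, the coefficient of $x^0$. The engine is the Cartier operator $\psi(\sum_u g_u x^u) = \sum_u g_{pu}x^u$, which satisfies the projection formula $\psi(H(x^p)G)=H(x)\psi(G)$ and preserves constant terms, $\mathrm{CT}\circ\psi=\mathrm{CT}$. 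The hypothesis that $0$ is the unique interior lattice point guarantees $\tfrac2p\Delta(V)\subseteq\Delta(V)$, so the operators $\beta_a(G)=\psi(V^a G)$ for $0\le a<p$ preserve the finite free $R$-module $M$ spanned by the monomials $x^u$, $u\in\Delta(V)\cap\Z^m$; I write $B_a$ for the matrix of $\beta_a$, and $\lambda\colon M\to R$, $\mathbf e_0\in M$ for the constant-term functional and the basis vector $x^0$.

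Next I would establish the basic Frobenius congruence $V^p\equiv\sigma(V)(x^p)\bmod p$ and iterate it. Writing $n=\sum_{i=0}^{\ell-1}n_i p^i$ in base $p$ with $\ell=\ell(n)$, repeated use of the projection formula gives the digit reduction $\psi(V^n)\equiv\sigma(V^{\lfloor n/p\rfloor})\,\psi(V^{n_0})\bmod p$, and feeding this into itself one digit at a time yields, after applying $\psi^{\ell}$ and then $\mathrm{CT}$, the matrix formula
\[
b_n\;\equiv\;\big[\sigma^{\ell-1}(B_{n_{\ell-1}})\,\sigma^{\ell-2}(B_{n_{\ell-2}})\cdots\sigma(B_{n_1})\,B_{n_0}\big]_{0,0}\pmod p,
\]
the $(0,0)$-entry of a product of Frobenius-twisted digit matrices. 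The substance of the proposition, however, is not this congruence modulo $p$ but its refinement modulo $p^{\ell-1}$, which I would prove by induction on $\ell$ by expanding $V^{p^i}=\sigma^i(V)(x^{p^i})+p(\cdots)$ binomially and tracking the error terms, each additional application of $\psi$ contributing control of one further power of $p$.

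Finally I would translate back. By the reconstruction Lemma and the relation~\eqref{p_seq}, the family of assertions $\{c_n\in p^{\ell(n)-1}R\}$ is equivalent to the family of Dwork congruences $b_n\equiv\sum_{n=n_1*n_2,\,n_2>0}c_{n_1}\,\sigma^{\ell(n_1)}(b_{n_2})\bmod p^{\ell(n)-1}$, which I would verify by induction on $\ell(n)$ from the refined formula: a splitting of the digit string into $k$ blocks contributes, since $\sum\ell(n_i)=\ell(n)$, a factor $p^{\ell(n)-k}$, so that the alternating, M\"obius-type structure of the $p$-sequence makes the telescoping exact. The hard part will be the bookkeeping in the previous paragraph: showing that the accumulated binomial error terms inject exactly one power of $p$ per carry—neither losing nor gaining—so that the exponent $\ell(n)-1$ is sharp, while the finiteness of $M$ furnished by the unique-interior-point hypothesis keeps every intermediate Laurent polynomial in $M$ and hence every quantity in $R$. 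A useful cross-check on the exponent is Kummer's theorem: in the multinomial expansion $b_n=\sum_{|\mathbf k|=n,\ \sum k_i\mu_i=0}\binom{n}{\mathbf k}\mathbf v^{\mathbf k}$ the power of $p$ dividing $\binom{n}{\mathbf k}$ counts carries in base~$p$, which is the arithmetic source of the factor $p^{\ell(n)-1}$.
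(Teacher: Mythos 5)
Your framework --- normalizing $w=0$, the Cartier operator $\psi$, the projection formula, and the mod-$p$ digit-matrix congruence for $b_n$ --- is sound, and it is the right family of ideas: the paper itself gives no direct proof but cites \cite[Lemma~1]{MV}, whose argument runs along exactly these Cartier-operator lines. But your outline has a genuine gap at the only step carrying the content of the proposition, and the shape of the gap is visible from the fact that you never use the hypothesis that $w$ is the \emph{unique} interior lattice point. The two places you invoke it are vacuous: $\frac{a+1}{p}\Delta\subseteq\Delta$ for $a<p$ follows from convexity and $w\in\Delta$ alone, and $M$ is finite for any Laurent polynomial. So if your ``bookkeeping'' induction could be completed as described, it would prove $c_n\in p^{\ell(n)-1}R$ for \emph{every} $V$ --- which is false: already for $\ell(n)=2$ the claim $c_{n_0+pn_1}\in pR$, i.e.\ $b_{n_0+pn_1}\equiv b_{n_0}\,\sigma(b_{n_1}) \bmod p$, says the Hasse--Witt datum is a \emph{scalar}, and this fails generically as soon as $\Delta^{\circ}$ contains two or more lattice points (one then only has your $\mu\times\mu$ matrix formula with $\mu=\#(\Delta^\circ\cap\Z^m)$; this is precisely why the sequels to \cite{MV} prove matrix-valued ``higher Hasse--Witt'' congruences). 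Relatedly, the ``refinement modulo $p^{\ell-1}$'' of the fixed-matrix product formula cannot hold in the form you suggest: writing $V^p=\sigma(V)(x^p)+pW$, the error $W$ is supported in $p\Delta$, so $W^j$ lives in $jp\Delta$ and after one application of $\psi$ you land in $(j+1)\Delta$, \emph{outside} your module $M$. Beyond the first power of $p$ the operators $\beta_a$ do not close on $M$, and the induction as sketched does not get off the ground.

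What actually makes the argument work, and where uniqueness enters, is a support estimate of the following kind: if $g$ is supported in $t\Delta$ with $t<1$, then every lattice point of $t\Delta$ lies in $\Delta^{\circ}$ (segments from the interior point $0$ to points of $\Delta$ stay interior), hence is equal to $0$; for instance $\psi(V^{n_0})=b_{n_0}\,x^{0}$ \emph{exactly} for $0\le n_0<p$, since its support lies in $\frac{n_0}{p}\Delta\cap\Z^m\subseteq\Delta^{\circ}\cap\Z^m=\{0\}$. It is this rank-one collapse --- available only because $\Delta^{\circ}\cap\Z^m$ is a single point --- that turns matrix products into products of the scalars $b_{n_i}$, and, combined with an induction over the filtration by the dilates $k\Delta$ that trades one power of $p$ per extra dilate, yields the exponent $\ell(n)-1$; this is the content of \cite[Lemma~1]{MV}. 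Your final paragraph, by contrast, is circular: by \eqref{p_seq} the congruence $b_n\equiv\sum_{n=n_1*n_2,\,n_2>0}c_{n_1}\sigma^{\ell(n_1)}(b_{n_2})\bmod p^{\ell(n)-1}$ \emph{is} verbatim the assertion $c_n\in p^{\ell(n)-1}R$, so no M\"obius-type telescoping remains to be done there; and the Kummer-carries remark cannot be the arithmetic source of the exponent, since the multinomial expansion makes no use of the interior-point hypothesis. In short: correct framework and correct mod-$p$ statement, but the modulus-$p^{\ell(n)-1}$ step --- the entire proposition --- is missing, and the mechanism you propose for it both omits the essential hypothesis and fails on support grounds.
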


The proof was essentially given in~\cite[Lemma~1]{MV} and the generalization to rings with Frobenius endomorphism stated here is straightforward. Notice that congruences~\eqref{Laurent_poly_cong} are stronger than~\eqref{c_cong_0}. Therefore it follows from Theorem~\ref{cong_fg_thm} that 

{\bf Corollary.} \emph{The formal group law
\[
F_V(x,y) \= f^{-1}(f(x)+f(y))\,,\qquad f(x)\=\sum_{n=1}^{\infty} \frac{b_{n-1}}{n} x^n
\] 
has coefficients in $R\otimes \Z_{(p)}$ whenever we can equip $R$ with a $p$th power Frobenius endomorphism.}

According to Stienstra, in case $V(x)$ is a homogeneous polynomial this formal group law is a coordinalization of the Artin--Mazur formal group attached to the middle cohomology of the hypersurface of zeroes of $V(x)$. Integrality of such formal group laws was established in~\cite{St87}, but our case is more general and methods are elementary. 

In~\cite{St87L} Stienstra proved a generalization of the Atkin and Swinnerton-Dyer congruences when $R$ is of finite type over $\Z$. Such congruences allow one to establish isomorphism of Artin--Mazur formal groups and formal groups attached to respective L-functions. Let us demonstrate how Theorem~\ref{P_formulas_thm} works in an example with a double covering. Consider a K3 surface $\mathfrak X$ (called $\mathcal A'$ in~\cite{BS85}) given by  
\[
Y^2 \= T_0 \, T_1 \, T_2 \, (T_1 \- T_2 )(T_1 T_2 \- T_0^2)\,.
\]
According to a computation in \emph{loc. cit.}, for every $p \ne 2$ the respective Euler factor of the L-function associated to $H^2(\mathfrak X)$ has the shape
\[
\mathcal{P}_p(T) \= (1-p \,T)^{20} (1 \+ A_p \, T \+ (-1)^{\frac{p-1}2} p^2 \, T^2)
\]  
where 
\[
A_p \= \begin{cases} \quad 0 \,, &\text{ if } p \equiv 3 \mod 4 \,,\\
2p - 4 a^2 \,, &\text{ if } p \equiv 1 \mod 4 \;\; (p = a^2 + 4 b^2\,, a,b \in \Z) \,.\\
\end{cases}
\]
The conditions of Proposition~\ref{Lfun_thm} are satisfied since
\[
Q(T) \= p \, \mathcal{P}_p(T/p) \= (1-T)^{20} (p \+ A_p \, T \+ (-1)^{\frac{p-1}2} p \, T^2) \;\in\; \Z[T]\,.
\]
By Proposition~\ref{Lfun_thm} the local characteristic polynomial of this formal group law at $p$ is given by
\be{K3_char_p}
\Psi(T) \= \begin{cases} p \,, &\text{ if } p \equiv 3 \mod 4 \,,\\
p \- \alpha \, T \,, &\text{ if } p \equiv 1 \mod 4 \;\; (\alpha^2 + A_p \, \alpha + p^2 \= 0, \alpha \, \in \Z_p^{\times})\,.\\ \end{cases}
\ee
On the other hand, by~\cite[Theorem~2]{St87} the coefficients of the logarithm of a coordinalization of the Artin--Mazur formal group associated to $H^2(\mathfrak X)$ are given by
\[\bal
b_n & \= \begin{cases}
0\,, & n \text{ odd,}\\
\text{ the coefficients of } T_0^nT_1^nT_2^n \text{ in } (T_0T_1T_2)^{n/2}(T_1 \- T_2 )^{n/2}(T_1 T_2 \- T_0^2)^{n/2} \,, & n \text{ even,}\\
\end{cases} \\
& \= \begin{cases}
0\,, & 4 \not | n \,,\\
\binom{n/2}{n/4}^2 \,, & 4 \;\; | n \,.\\
\end{cases}
\eal\]
We apply formula~\eqref{unit_root} to obtain a $p$-adic analytic formula for the coefficient $\alpha \in \Z_p^{\times}$ in~\eqref{K3_char_p} when $p \equiv 1 \mod 4$:
\[
\alpha \= \lim_{k \to \infty} \frac{b_{p^{k}-1}}{b_{p^{k-1}-1}} \= \lim_{k \to \infty}  \frac{\Gamma_p\bigl(\tfrac{p^{k}-1}2+1\bigr)^2}{\Gamma_p\bigl(\tfrac{p^k-1}4+1\bigr)^4} \= \frac{\Gamma_p(\tfrac12)^2}{\Gamma_p(\tfrac34)^4} \= \frac{(-1)^{\tfrac{p+1}2}}{\Gamma_p(\tfrac34)^4} \= - \Gamma_p(\tfrac14)^4\,,
\]
where $\Gamma_p(.)$ is the $p$-adic gamma function (see \cite[\S IV.2]{Kob}). It follows that
\[
A_p \= \- \alpha \- p \, \alpha^{-1} \= \Gamma_p(\tfrac14)^4 \+ p \, \Gamma_p(\tfrac34)^4\,.
\]

\subsection{Hypergeometric formal group laws}\label{sec:HG}

Consider a finite system of integral weights on positive integers $\gamma = \{ \gamma_\nu ; \nu \ge 1\}$. By this we mean that each $\gamma_\nu \in \Z$ and $\gamma_\nu=0$ for all but finitely many $\nu$. We assume that $\gamma$ satisfies the condition 
\be{HG_cond}
\sum_{\nu \ge 1} \nu \, \gamma_\nu \= 0\,.
\ee
Following~\cite{RV03}, we associate to $\gamma$ the sequence of rational numbers $u_n \= \prod_{\nu \ge 1} (\nu \, n)!^{\gamma_\nu}$, $n \ge 0$. One has $\ord_p(u_n) = \sum_{i=1}^{\infty} \L(\frac{n}{p^i})$ where 
\[
\L(t) \= \sum_{\nu} \gamma_\nu \lfloor \nu \, t \rfloor 
\] 
is the associated~\emph{Landau function}. This function is right-continuous and locally constant. Condition~\eqref{HG_cond} implies that $\L(t)$ is periodic with period~1. It is named after Emil Landau who observed that the sequence $\{u_n ; n \ge 0\}$ is integral if and only if 
\be{int_Landau}
\L(t) \ge 0 \quad\text{ for all }\quad t\,.
\ee
Let $N \= l.c.m. \{ \nu \,|\, \gamma_\nu \ne 0\}$. Assuming the integrality of $\{ u_n; n \ge 0\}$, we will give a criterion of integrality of the rational formal group law given by
\[
F_{\gamma}(x,y) \= f^{-1}(f(x)+f(y))\,,\qquad  f(x) \= \sum_{n=0}^{\infty} u_{n} \frac{x^{Nn+1}}{Nn+1} \= \int \sum u_n x^{Nn} dx \,.\]

\begin{proposition}\label{hg_prop} Suppose the system of weights $\gamma = \{ \gamma_\nu ; \nu \ge 1\}$ satisfies conditions~\eqref{HG_cond} and~\eqref{int_Landau}. 

Let $p > N$ be a prime. Let $d = \min\{ m > 0 \;|\; p^m \equiv 1 \mod N \}$ be the order of $p$ in the multiplicative group of invertible residues modulo $N$. For each $0 \le a < d$ let $1 \le m_a < N$ be the unique number such that $m_a p^a \equiv 1 \mod N$. Define $\lambda_a := \ord_p \bigl(u_{\frac{m_ap^a-1}{N}} \bigr)$.

\begin{itemize}
\item[(i)]  We have $\lambda_a \=  \sum_{i=1}^{a} \L\bigl( \frac{p^{d-i}-1}{N}\bigr)$ for $0 \le a \le d-1$. 

\item[(ii)] The coefficients of $F_{\gamma}$ are $p$-integral if and only if $\lambda_a \ge a$ for each $1 \le a \le d-1$.

\item[(iii)] Assume~(ii) is satisfied. If $\lambda_{d-1} = d-1$, the height of $F_{\gamma}$ at $p$ equals $d$ and the local characteristic polynomial is given by
\be{xi_expr}
\Psi_{F_{\gamma}}(T) \= p \- \xi \, T^d \quad \text{ with } \quad \xi  \= (-1)^{(\gamma_2+1)d-1} \,\prod_{\nu} \prod_{a=0}^{d-1} \Gamma_p\Bigl( \bigl\{1 \- \frac{ \nu m_a}{N}\bigr\}\Bigr)^{\gamma_\nu} \,,
\ee
where $\{ x\} = x \- \lfloor x \rfloor$ is the fractional part of $x$.
If $\lambda_{d-1} \ge d$ then the height at $p$ is infinite. 
\end{itemize}
\end{proposition}

Notice that due to the periodicity of the Landau function and formula~(i) the numbers $\lambda_a$ depend only on the residue class $\; p \mod N$. Hence the property of $p$-integrality~(ii) and the height depend only on $\; p \mod N$.

\begin{proof}[Proof of Proposition~5.] We write $f(x) \= \sum_{n=1}^{\infty} b_{n-1} \frac{x^n}{n}$ where $b_n \= u_{\frac{n}{N}}$ if $N \;|\, n$ and $b_n=0$ otherwise. Note that jumps of $\L(t)$ happen at rational points whose denominators divide $N$. Therefore $\L(t)=0$ for $0 \le t < \frac1N$. Using this fact and periodicity of $\L(t)$ we see that
\[\bal
\ord_p \bigl(b_{m_ap^a-1} \bigr) & \= \sum_{i=1}^{\infty} \L\bigl( \frac{m_ap^a-1}{Np^i}\bigr) \= \sum_{i=1}^{a} \L\bigl( \frac{m_ap^a-1}{Np^k}\bigr) \\
& ( p^i < m_ap^a \Rightarrow i \le a \;\text{ since }\; m_a < N < p )\\
&\= \sum_{i=1}^{a} \L\bigl( \frac{m_ap^{a-i}-1}{N}\bigr) \qquad (\text{because }\; \bigl\lfloor \frac{m_ap^a-1}{p^i}\bigr\rfloor \= m_ap^{a-i}-1 )\\
&\= \sum_{i=1}^{a} \L\bigl( \frac{p^{d-i}-1}{N}\bigr) \qquad (\text{because }\;m_a \equiv p^{d-a} \mod N )\,,\\
\eal\] 
which proves~(i).

Let $\{ c_n\}$ be the $p$-sequence associated to $\{b_n\}$. It is easy to see that $c_n = 0$ when $N \not| n$. We also observe that $c_{m_ap^a-1}=b_{m_ap^a-1}$ for each $1 \le a < d$. Indeed, since $m_a$ is the smallest positive integer with the property $m_ap^a \equiv 1 \mod N$ it is impossible to divide the $p$-adic expansion of $m_ap^a-1$ into two parts so that each part represents a number divisible by $N$. It now follows from Theorem~1 that the condition in~(ii) is necessary for $p$-integrality of $F_\gamma$.

Next, assume that $\lambda_a \ge a$ for $1 \le a < d$. We shall show that $F_\gamma$ is $p$-integral and simultaneously prove~(iii). Same arguments as in the above proof of~(i) yield $\ord_p \bigl(b_{p^d-1} \bigr) \= \sum_{i=1}^{d-1} \L\bigl( \frac{p^d-1}{Np^i}\bigr) \= \sum_{i=1}^{d-1} \L\bigl( \frac{p^{d-i}-1}{N}\bigr) \= \ord_p \bigl(b_{m_{d-1}p^{d-1}-1} \bigr) = \lambda_{d-1}$, which is $\ge d-1$ by our assumption. 

Let $k=a+sd$ with $0 \le a < d$, $s \ge 0$ and assume that $m \equiv m_a \mod N$. For the rest of the proof we will need the following two observations:
\be{ordpb_ineq}
\ord_p \bigl(b_{mp^k-1} \bigr) \ge \ord_p \bigl(b_{m_a p^k-1} \bigr)
\ee
and
\be{ordpb_shift}
\ord_p \bigl(b_{mp^k-1} \bigr) \= s \, \ord_p \bigl(b_{p^d-1} \bigr) \+ \ord_p \bigl(b_{mp^a-1} \bigr)\,. 
\ee
We prove~\eqref{ordpb_ineq} as follows
\[\bal
\ord_p \bigl(b_{mp^k-1} \bigr) & \= \sum_{i=1}^{k+\ell(m)-1} \L\bigl( \frac{mp^{k}-1}{Np^i}\bigr) \;\overset{ \text{by \eqref{int_Landau}}}\ge\; \sum_{i=1}^{k} \L\bigl( \frac{mp^{k}-1}{Np^i}\bigr) \= \sum_{i=1}^{k} \L\bigl( \frac{mp^{k-i}-1}{N}\bigr)\\
&   \overset{\begin{small}m \equiv m_a \,{\rm mod}\, N\end{small}} \= \sum_{i=1}^{k} \L\bigl( \frac{m_ap^{k-i}-1}{N}\bigr) \= \sum_{i=1}^{k} \L\bigl( \frac{m_ap^{k}-1}{Np^i}\bigr) \= \ord_p(b_{m_a p^{k}-1})\,.
\eal\]
For~\eqref{ordpb_shift} we observe that
\[\bal
\ord_p \bigl(b_{mp^{k}-1} \bigr) \- \ord_p \bigl(b_{mp^{k-d}-1} \bigr) &\= \sum_{i=1}^{k+\ell(m)-1} \L\bigl( \frac{m p^{k-i}-1}{N}\bigr) \-  \sum_{j=1}^{k-d+\ell(m)-1} \L\bigl( \frac{m p^{k-d-j}-1}{N}\bigr) \\
&\overset{i=j+d}\= \quad \sum_{i=1}^{d} \L\bigl( \frac{m p^{k-i}-1}{N}\bigr) \= \sum_{i=1}^{d} \L\bigl( \frac{ p^{d-i}-1}{N}\bigr) \= \ord_p(b_{p^d-1})\,.
\eal\] 

When $\ord_p(b_{p^d-1}) \ge d$ then~\eqref{ordpb_ineq}, \eqref{ordpb_shift} and the assumption $\lambda_a \ge a$ yield 
\[\ord_p(b_{m p^k-1}) \ge \ord_p(b_{m_a p^k-1}) = s d + \ord_p(b_{m_a p^a-1}) \ge s d + a = k\,,\]
hence $f(x) \in \Z_{(p)}\lb x\rb$ and $F_\gamma$  is strictly isomorphic to $x+y$ over $\Z_{(p)}$. 

It remains to consider the case $\ord_p(b_{p^d-1})=d-1$. Then by~\eqref{ordpb_shift} we have $\ord_p(b_{p^{sd}-1}) = s \, \ord_p(b_{p^d-1}) = s(d-1)$. If $F_{\gamma}$ were $p$-integral then by (i) in Theorem~2 the height would equal $h=d$ and by~(iv) in Theorem~2 the characteristic polynomial would equal $\Psi_p(T)=p - \xi \, T^d$ with $\xi \in \Z_p^{\times}$ is such that $\xi \equiv \frac1{p^{d-1}} \, \frac{b_{p^{sd}-1}}{b_{p^{(s-1)d}-1}} \mod p^s$ for each $s \ge 1$. We shall show that, more generally, for $k \ge d$ and $m$ such that $mp^k \equiv 1 \mod N$ one has 
\be{padic_ratio_formula}
\frac{b_{m p^k-1}}{p^{d-1} b_{m p^{k-d}-1} } \equiv \xi \mod p^{k+1-d}
\ee
with the $p$-adic unit $\xi$ given in~\eqref{xi_expr}. Notice that~\eqref{padic_ratio_formula} implies that $\ord_p  \bigl(b_{m p^{k}-1} - \xi p^{d-1} b_{m p^{k-d}-1}\bigr) \ge (k+1-d) + (d-1) \+ \ord_p(b_{m p^{k-d}-1}) \ge k$ for all $k \ge d$. Since for $0 \le k < d$ we also have $\ord_p(b_{mp^k-1}) \ge \ord_p(b_{m_k p^k-1}) \ge k$ by~\eqref{ordpb_ineq} and the assumption $\lambda_a \ge a$, we obtain that   
\[
f(x) \- \frac1p \, \xi \, f(x^{p^d}) \in \Z_p\lb x \rb\,,
\]
and $p$-integrality of $F_{\gamma}$ then follows from Hazewinkel's functional equation lemma.

The proof of~\eqref{padic_ratio_formula}  is a routine calculation in $p$-adic analysis. The $p$-adic gamma function is defined so that $\frac{n!}{p^{\lfloor n/p \rfloor} \lfloor n/p \rfloor!} = (-1)^{n+1}\Gamma_p(n+1)$ for any $n \ge 1$. Observe that one has $\lfloor \frac1{p^i}\lfloor \frac{n}{p}\rfloor\rfloor = \lfloor \frac{n}{p^{i+1}}\rfloor$ for each $i \ge 1$. With $n=\frac{mp^k-1}{N}$ one has $\lfloor \frac{\nu n}{p^i}\rfloor = \frac{mp^{k-i}}{N/\nu} - \langle\frac{\nu m_i}{N}\rangle$ whenever $i<d$, where $\langle x \rangle = \{x\} = x - \lfloor x \rfloor$ when $x \not\in \Z$ and $\langle x \rangle = 1$ when $x \in \Z$. We apply the above formula for the ratio of factorials $d$ times with $0 \le i < d$ and get 
\[
\frac{(\nu n)!}{p^* \lfloor \nu n/p^d \rfloor!} \= (-1)^{\varepsilon_\nu} \prod_{i=0}^{d-1} \Gamma_p\bigl(1+ \frac{mp^{k-i}}{N/\nu}-\langle  \frac{\nu m_i}{N} \rangle\bigr) \equiv (-1)^{\varepsilon_\nu} \prod_{i=0}^{d-1} \Gamma_p\bigl(1 - \langle  \frac{\nu m_i}{N} \rangle\bigr) \mod p^{k+1-d}\,,
\]
where
\[
\varepsilon_\nu \= \sum_{i=0}^{d-1} \bigl(1+ \frac{mp^{k-i}}{N/\nu}-\langle  \frac{\nu m_i}{N} \rangle\bigr)
\= d \+ \frac{\nu \, m p^k}{N}  \sum_{i=0}^{d-1}\frac1{p^i} - \sum_{i=0}^{d-1}\langle  \frac{\nu m_i}{N} \rangle
\]
and we don't need to care about the power of $p$ on the left in this formula since we already know they will sum up to $d-1$ in~\eqref{padic_ratio_formula}. Note also that $1-\langle x \rangle = \{1-x\}$. Condition~\eqref{HG_cond} now implies that $\sum_{\nu} \gamma_{\nu}\varepsilon_\nu$ is independent of the pair $(m,k)$, so we can take it to be $(1,d)$ and get
\[\bal
\sum_{\nu} \gamma_{\nu}\varepsilon_\nu \= \bigl(\sum_{\nu} \gamma_\nu\bigr)d \+ \sum_{\nu} \gamma_{\nu} \sum_{i=0}^{d-1} \Bigl\lfloor\frac{\nu(p^d-1)}{N \, p^i} \Bigr\rfloor \= \bigl(\sum_{\nu} \gamma_\nu\bigr)d \+\sum_{i=0}^{d-1} \L\bigl(\frac{p^d-1}N\bigr) \\
\= (\sum_{\nu} \gamma_\nu\bigr)d \+ \ord_p(b_{p^d-1}) \= (\sum_{\nu} \gamma_\nu\bigr)d \+ d-1 \;\equiv\; (\gamma_2 \+ 1\bigr)d \-1 \mod 2\,,
\eal\]
which proves our claim.  
\end{proof}

Next we shall establish a criterion of $p$-integrality of $F_{\gamma}$ for all residue classes $p \mod N$. 

\begin{proposition}\label{int_all_p_prop} The coefficients of the formal group law $F_\gamma$ are $p$-integral for all but finitely many primes $p$ if and only if the Landau function satisfies
\be{int_all_p_1}
\L\bigl( \frac {i-1} N \bigr) \ge 1 \quad\text{ for every }\quad 2 \le i \le N-1 \quad \text{ with }\quad (i,N)=1
\ee
If~\eqref{int_all_p_1} is satisfied then $F_\gamma$ is $p$-integral for all $p>N$. 
\end{proposition}
\begin{proof} Recall that for a prime $p > N$ the property of $F_\gamma$ to be $p$-integral depends only on $p \mod N$. Assume that $F_\gamma$ is $p$-integral for all but finitely many primes. For every $i$ as in~\eqref{int_all_p} pick $p>N$ such that $p^{-1} \equiv i \mod N$. Then by~(ii) and~(i) with $a=1$ we have $1 \ge \lambda_1 = \L((p^{d-1}-1)/N) = L((i-1)/N)$. Conversely, assuming that~\eqref{int_all_p_1} is satisfied we have $\lambda_a \ge a$ for every $p>N$ and every $1 \le a < d(p)$ because every term in the sum~(i) for $\lambda_a$ is $\ge 1$. 
\end{proof}

Notice that~\eqref{int_all_p_1} is equivalent to
\be{int_all_p}
\L\bigl( \frac i N \bigr) \ge \gamma_N + 1 \quad\text{ for every }\quad 2 \le i \le N-1 \quad \text{ with }\quad (i,N)=1
\ee
because of the identity $\L(\frac{i}{N}) = \L(\frac{i-1}{N}) + \gamma_N$ for $i$ coprime to $N$. Indeed, for every $\nu | N$, $\nu \ne N$ we have $\lfloor \frac{\nu i}{N} \rfloor = \lfloor \frac{\nu(i-1)}{N} \rfloor$ whenever $(i,N)=1$.

Let us consider a couple of examples with $N=15$. Here is a table of heights computed using Proposition~\ref{hg_prop}. We write '--' when there is no integrality, and '*' means that $\L(\frac{i-1}{N})=0$, that is condition~\eqref{int_all_p_1} breaks for this $i$.

\begin{center}
\begin{tabular}{c|cccccccc}
$i$ &1&2&4&7&8&11&13&14\\
\hline
&&&&&&&&\\
$u_n = \frac{(15n)!n!}{(3n)!^2(5n)!^2}$ &1&4&2&--*&4&--*&--*&2\\
&&&&&&&&\\
$u_n = \frac{(5n)!}{n!^2(3n)!}$ &1&--$*$&2&$\infty$&--&2&$\infty$&$\infty$\\
\end{tabular}
\end{center}
Since Landau functions add when one adds two systems of weights, we see that the product of two factorial ratios in the table $u_n=\frac{(15n)!}{n!(3n)!^3(5n)!}$ will give a formal group integral at all $p>15$. This is because there are no two $*$'s at the same column.

\bigskip

In this section we extended results, which Taira Honda obtained for sequences
\be{un_Honda}
u_n \= \prod_{k=1}^r\binom{\alpha_k+n}{n} \= \frac{(\alpha_1)_n \ldots (\alpha_r)_n}{n!^r} \,, \qquad \{\alpha_1,\ldots, \alpha_r\} \subseteq \{\frac1N,\ldots,\frac{N-1}N\}\,,
\ee
where $(x)_n=\prod_{i=0}^{n-1}(x+i)$ is the Pochhammer symbol, to integral ratios of factorials $u_n = \prod (\nu\, n)!^{\gamma_\nu}$. Namely, Theorem~1 of~\cite{Ho72_HG} is stated for our sequences $\{u_n\}$ in Proposition 5 (ii)-(iii), Theorems~2 and~3 of~\cite{Ho72_HG} in Proposition~5 (i) and Proposition~6 respectively. Honda's sequence~\eqref{un_Honda} can be written as an integral ratio of factorials (times $C^n$ for a constant $C$ not divisible by any prime $p>N$) whenever the set $\{ e^{2\pi i \alpha_1},\ldots, e^{2\pi i \alpha_r}\}$ is closed under Galois conjugation. Conversely, an integral ratio of factorials can be written via Pochhammer symbols (see~\cite{RV03}). Namely, one has
\be{u_via_Pochhammer}
 \prod_\nu (\nu\, n)!^{\gamma_\nu} \= C^n \frac{(\alpha_1)_n \ldots (\alpha_r)_n}{(\beta_1)_n \ldots (\beta_r)_n} \,,\quad 
\ee
where $C=\prod_{\nu} \nu^{\nu \, \gamma_\nu}$, and $\{\alpha_i\}$, $\{\beta_j\}$ are two distinct sets of rational numbers related to the system of weights $\gamma$ via the formula
\[
\prod_\nu (t^\nu -1)^{\gamma_\nu} \= \prod_{j=1}^r (t - e^{2 \pi i \alpha_j}) / \prod_{k=1}^r (t - e^{2 \pi i \beta_k})\,.
\]
Hence our sequences are of type~\eqref{un_Honda} whenever $\beta_j = 1$ for all $j$. It is very likely that statements similar to Propositions~\ref{hg_prop} and~\ref{int_all_p_prop} also hold for more general ratios of Pochhammer symbols.

\end{document}